\nonstopmode \numberwithin{equation}{section}
\nonstopmode\numberwithin{equation}{section}
\newtheorem*{thmA}{Theorem A}
\newtheorem*{thmB}{Theorem B}
\newtheorem*{thmC}{Theorem C}
\newtheorem*{thmD}{Theorem D}
\theoremstyle{plain}
\newtheorem{conj}{Conjecture}
\theoremstyle{definition}
\newtheorem{defn}{Definition}[section]
\newtheorem{thm}{Theorem}[section]
\newtheorem{prob}{Problem}[section]
\newtheorem{cor}{Corollary}[section]
\newtheorem{prop}{Proposition}[section]
\newtheorem{rem}{Remark}[section]
\newtheorem{lem}{Lemma}[section]
\newcounter{minutes}\setcounter{minutes}{\time}
\newcounter{hours}\setcounter{hours}{\time}
\newcounter {own}
\def\theown {\thesection       .\arabic{own}}
\newenvironment{pf}[1][]{%
 \vskip 3mm
 \noindent
 \ifthenelse{\equal{#1}{}}%
  {{\slshape Proof. }}%
  {{\slshape #1.} }%
 }%
{\qed\bigskip}
\newcounter{alphabet}
\def\be{\begin{equation}}
\def\ee{\end{equation}}
\newcommand{\bee}{\begin{enumerate}}
\newcommand{\eee}{\end{enumerate}}
\newcommand{\blem}{\begin{lem}}
\newcommand{\elem}{\end{lem}}
\newcommand{\bthm}{\begin{thm}}
\newcommand{\ethm}{\end{thm}}
\newcommand{\bcor}{\begin{cor}}
\newcommand{\ecor}{\end{cor}}
\newcommand{\beg}{\begin{examp}}
\newcommand{\eeg}{\end{examp}}
\newcommand{\begs}{\begin{examples}}
\newcommand{\eegs}{\end{examples}}
\newcommand{\bdefn}{\begin{defn}}
\newcommand{\edefn}{\end{defn}}
\newcommand{\bprob}{\begin{prob}}
\newcommand{\eprob}{\end{prob}}
\newcommand{\bei}{\begin{itemize}}
\newcommand{\eei}{\end{itemize}}
\newcommand{\bcon}{\begin{conj}}
\newcommand{\econ}{\end{conj}}
\newcommand{\bcons}{\begin{conjs}}
\newcommand{\econs}{\end{conjs}}
\newcommand{\bprop}{\begin{prop}}
\newcommand{\eprop}{\end{prop}}
\newcommand{\br}{\begin{rem}}
\newcommand{\er}{\end{rem}}
\newcommand{\brs}{\begin{rems}}
\newcommand{\ers}{\end{rems}}
\newcommand{\bo}{\begin{obser}}
\newcommand{\eo}{\end{obser}}
\newcommand{\bos}{\begin{obsers}}
\newcommand{\eos}{\end{obsers}}
\newcommand{\bpf}{\begin{pf}}
\newcommand{\epf}{\end{pf}}
\newcommand{\ba}{\begin{array}}
\newcommand{\ea}{\end{array}}
\newcommand{\beq}{\begin{eqnarray}}
\newcommand{\beqq}{\begin{eqnarray*}}
\newcommand{\eeq}{\end{eqnarray}}
\newcommand{\eeqq}{\end{eqnarray*}}
\begin{document}

\title{Bohr inequality and Bohr-Rogosinski inequality for $K$-Quasiconformal harmonic mappings}

\author{Molla Basir Ahamed}
\address{Molla Basir Ahamed, Department of Mathematics, Jadavpur University, Kolkata-700032, West Bengal, India.}
\email{mbahamed.math@jadavpuruniversity.in}

\author{Taimur Rahman}
\address{Taimur Rahman, Department of Mathematics, Jadavpur University, Kolkata-700032, West Bengal, India.}
\email{taimurr.math.rs@jadavpuruniversity.in}

\subjclass[{AMS} Subject Classification:]{Primary 30A10, 30C45, 30C62,  30C50}
\keywords{Bohr radius, Convex function, Starlike function, Harmonic mappings, Concave univalent function}

\def\thefootnote{}
\footnotetext{ {\tiny File:~\jobname.tex,
printed: \number\year-\number\month-\number\day,
          \thehours.\ifnum\theminutes<10{0}\fi\theminutes }
} \makeatletter\def\thefootnote{\@arabic\c@footnote}\makeatother

\begin{abstract}
In this paper, we prove several sharp Bohr-type and Bohr–Rogosinski-type inequalities for $K$-quasiconformal, sense-preserving harmonic mappings on $\mathbb{D}$, whose analytic part is subordinate to a function belonging to the class of concave univalent functions on $\mathbb{D}$. In addition, we derive Bohr-type inequalities for $K$-quasiconformal, sense-preserving harmonic mappings on $\mathbb{D}$, where the analytic part is subordinate to a function from the Ma–Minda class of convex and starlike functions. The results generalize several existing results.
\end{abstract}

\maketitle
\pagestyle{myheadings}
\markboth{M. B. Ahamed and T. Rahman}{Bohr's phenomenon for $K$-quasiconformal harmonic mappings}
%\tableofcontents
\section{Introduction}
The notable work of Bohr in 1914 \cite{Bohr-PLMS-1914} on power series in complex analysis has inspired extensive research in both complex analysis and related fields. This contribution, widely known as Bohr’s phenomenon, has been studied for various function spaces, leading to numerous remarkable developments in recent years. For further information, we refer to the recent expository article by Abu-Muhanna \emph{et al.} \cite{Abu-Muhanna-Ali-Ponnusami-2016}, the survey chapter by Garcia \emph{et al.} \cite[Chapter 8]{Garcia-Mashreghi-Ross-2018}, and the references cited therein.\vspace{2mm}

\subsection{Classical Bohr inequality and its generalizations:}

Let $\mathcal{B}$ be the class of analytic functions on $\mathbb{D}$ such that $|f(z)| \leq 1$ for every $z \in \mathbb{D}$. The classical Bohr inequality, published by Hardy in 1914 following his correspondence with Harald Bohr (see \cite{Bohr-PLMS-1914}), is given by:
\begin{thmA}
	If $f(z)=\sum_{n=0}^{\infty}a_nz^n\in\mathcal{B}$, then
	\begin{align}\label{eq-1.1}
		M_f(r):=\sum_{n=0}^{\infty}|a_n|r^n\le 1\;\; \mbox{for}\;\; |z|=r\le\frac{1}{3}.
	\end{align}
	The radius $1/3$ is best possible.
\end{thmA}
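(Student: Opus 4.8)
The plan is to prove the classical Bohr inequality by the standard coefficient estimates coming from the Schwarz--Pick lemma combined with a direct majorization of the tail of the series. Write $f(z)=\sum_{n=0}^\infty a_nz^n\in\mathcal B$. The first step is to record the two basic facts: $|a_0|=|f(0)|\le 1$, and for each $n\ge 1$ one has $|a_n|\le 1-|a_0|^2$. The latter follows from the Schwarz--Pick lemma applied to the disc automorphism $\varphi(z)=(f(z)-a_0)/(1-\overline{a_0}f(z))$, which maps $\D$ into $\D$ and vanishes at the origin; then $|a_n|$ is bounded by the $n$-th Taylor coefficient bound for such functions, namely $|a_n|\le 1-|a_0|^2$ (this is immediate from $|\varphi'(0)|\le 1$ for $n=1$ and from the Cauchy estimate $|\varphi(z)|\le 1$ together with Schwarz's lemma $|\varphi(z)|\le|z|$ for $n\ge 2$, since $\psi(z)=\varphi(z)/z$ is again a self-map of $\D$).

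The second step is the majorization itself. Set $a:=|a_0|\in[0,1]$ and let $|z|=r$. Using $|a_n|\le 1-a^2$ for $n\ge1$ and summing the geometric series,
\begin{align*}
M_f(r)=\sum_{n=0}^\infty |a_n|r^n \le a + (1-a^2)\sum_{n=1}^\infty r^n = a + (1-a^2)\,\frac{r}{1-r}.
\end{align*}
The third step is to show that the right-hand side is $\le 1$ whenever $r\le 1/3$, uniformly in $a\in[0,1]$. Subtracting $1$, we must check
\begin{align*}
a-1+(1-a^2)\frac{r}{1-r} = (1-a)\left(-1+(1+a)\frac{r}{1-r}\right) \le 0,
\end{align*}
and since $1-a\ge 0$ it suffices that $(1+a)\frac{r}{1-r}\le 1$, i.e. $(1+a)r\le 1-r$, i.e. $r\le \frac{1}{2+a}$. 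As $a\le 1$ this holds as soon as $r\le 1/3$, which is exactly the claimed range.

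For sharpness, the fourth step is to exhibit extremal functions showing $1/3$ cannot be enlarged. Take the Möbius map $f_a(z)=\dfrac{a-z}{1-az}$ for $a\in(0,1)$; its Taylor coefficients are $a_0=a$ and $a_n=-(1-a^2)a^{n-1}$ for $n\ge1$, so
\begin{align*}
M_{f_a}(r)=a+(1-a^2)\sum_{n=1}^\infty a^{n-1}r^n = a+\frac{(1-a^2)r}{1-ar}.
\end{align*}
A short computation shows $M_{f_a}(r)>1$ precisely when $r>\dfrac{1}{1+2a}$, and letting $a\to 1^-$ forces the threshold $\dfrac{1}{1+2a}\to \dfrac13$; hence for any $r>1/3$ there is a function in $\mathcal B$ violating the inequality, so $1/3$ is best possible. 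The main obstacle here is genuinely minor --- it is just making sure the coefficient bound $|a_n|\le 1-|a_0|^2$ is justified cleanly (the $n=1$ and $n\ge2$ cases being slightly different) and then handling the elementary optimization over $a\in[0,1]$ carefully enough to land on the exact constant $1/3$ rather than something weaker.
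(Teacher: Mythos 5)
Your overall architecture is the standard one and the second half of your argument is fine: the reduction of $M_f(r)\le a+(1-a^2)\frac{r}{1-r}$ to the elementary inequality $r\le\frac{1}{2+a}$ is correct, and your sharpness computation with $f_a(z)=(a-z)/(1-az)$ is exactly what the paper itself records (the paper states Theorem A as a classical result and only supplies this extremal family, so the sharpness halves coincide).

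The genuine gap is in your justification of the key coefficient bound $|a_n|\le 1-|a_0|^2$ for $n\ge 2$, which you yourself identify as the main obstacle but do not actually overcome. Passing to $\varphi=(f-a_0)/(1-\overline{a_0}f)$ and showing that all Taylor coefficients $c_n$ of $\varphi$ satisfy $|c_n|\le 1$ does not transfer back to $f$ with the factor $1-|a_0|^2$: inverting the M\"obius map gives $f-a_0=(1-|a_0|^2)\sum_{j\ge1}(-\overline{a_0})^{j-1}\varphi^j$, so for $n\ge 2$ the coefficient $a_n$ is $(1-|a_0|^2)$ times a nonlinear combination of the $c_j$ whose modulus is not obviously $\le 1$; a crude estimate along your lines only yields $|a_n|\le 1+|a_0|$, which is useless here. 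The clean classical repair is Wiener's averaging trick: for fixed $n$ put $\omega=e^{2\pi i/n}$ and $G(w)=\sum_{m\ge0}a_{mn}w^m$, so that $G(z^n)=\frac{1}{n}\sum_{k=0}^{n-1}f(\omega^kz)$ maps $\mathbb{D}$ into $\mathbb{D}$; then $G\in\mathcal{B}$ with $G(0)=a_0$ and $G'(0)=a_n$, and the Schwarz--Pick derivative estimate at the origin gives $|a_n|=|G'(0)|\le 1-|G(0)|^2=1-|a_0|^2$. With that substitution your proof is complete; as written, the $n\ge2$ case of the lemma is asserted rather than proved.
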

Initially, Bohr introduced the inequality \eqref{eq-1.1} for $|z| = r \leq 1/6$. Subsequently, M. Riesz, I. Schur, and F. Wiener independently improved the inequality \eqref{eq-1.1} for $|z| \leq 1/3$ and established that the constant $1/3$ is the best possible. The constant $1/3$ and the inequality \eqref{eq-1.1} are referred to as the Bohr radius and the Bohr inequality for the class $\mathcal{B}$, respectively. Moreover, for the function $f_a$
\begin{align*}
	f_a(z)=\frac{a-z}{1-az}=a-(1-a^2)\sum_{n=1}^{\infty}a^{n-1}z^n,\; z\in\mathbb{D},
\end{align*}
it follows easily that $M_{f_a}(r)>1$ if, and only if, $r>1/(1+2a)$, which,  for $a\to 1^{-}$ shows that the radius $1/3$ is best possible.\vspace{2mm}

In $2000$, Djakov and Ramanujan \cite{Djakov-JA-2000} investigated the same phenomenon from different point of view. In the case $|a_j|$ is replaced by  $|a_j|^p$, they have studied the powered Bohr radius $r_p$ with two different settings, $p\in (0, 2)$ and $p\in (0, 2]$. A conjecture proposed for powered Bohr radius $r_p$ in \cite{Djakov-JA-2000} has
been settled affirmatively in \cite{Kayumov-Ponnusamy-AASFM-2019}. It is important to note that in the majorant series $M_f(r)$ of $f$, by substituting $|a_0|$ with $|a_0|^2$ in Bohr's inequality, the constant $1/3$ can be replaced by $1/2$. Additionally, when $a_0 = 0$ in Theorem A, the exact Bohr radius improves to $1/\sqrt{2}$ (see \cite{Kayumov-Ponnusamy-CMFT-2017}, \cite[Corollary 2.9]{Paulsen-Popescu-Singh-PLMS-2002}, and the recent paper \cite{Ponnusamy-Wirths-CMFT-2020} for a more general result).  In \cite{Paulsen-Singh-2022}, Paulsen \emph{et al.} presented a simple and intriguing proof of the Bohr inequality. In recent times, there has been an extensive body of research on the classical Bohr inequality and its generalizations. The notion of Bohr radius has been studied by Abu-Muhanna and Ali \cite{Abu-Muhanna-CVEE-2010, Abu-Muhanna-Ali-JMAA-2011}, particularly for analytic functions from 
$\mathbb{D}$ to simply connected domains and to the exterior of the closed unit disk in $\mathbb{C}$. Moreover, the Bohr phenomenon for shifted disks and simply connected domains is explored extensively in \cite{Ahamed-Allu-Halder-AFM-2022,Evdoridis-Ponnusamy-Rasila,Fournier-Ruscheweyh-CRM-2010}. Allu and Halder \cite{Allu-Halder-JMAA-2021}, as well as Bhowmik and Das \cite{Bhowmik-Das-JMAA-2018}, have examined this phenomenon in relation to subordination classes in their respective works. Readers interested in this topic are referred to the papers \cite{Abu-Muhanna-CVEE-2010,Abu-Muhanna-Ali-JMAA-2011,Alkhaleefah-Kayumov-Ponnusamy-PAMS-2019,Evdoridis-Ponnusamy-Rasila,Ismagilov-Kayumov-Ponnusamy-JMAA-2020,Kayumov-Ponnusamy-CMFT-2017,Kayumov-Ponnusamy-AASFM-2019} and their references therein.
\subsection{Bohr-Rogosinski inequality and its generalizations:}
 In addition to the Bohr radius, the Rogosinski radius \cite{Rogosinski-MJ-1923} is another concept defined as follows: Let $f(z)=\sum_{n=0}^{\infty}a_nz^n$ be analytic function in $\mathbb{D}$ with $|f(z)|\leq 1$ in $\mathbb{D}$. For every $N\geq 1$, we have $S_N(z)|\leq 1$ for $|z|\leq 1/2$, where $S_N(z):=\sum_{n=0}^{N-1}a_nz^n$ is the partial sum of $f$, and the radius $1/2 $ is the best possible. Inspired by the Rogosinski radius, Kayumov and Ponnusamy [34] introduced the Bohr-Rogosinski sum 
 $R^{f}_N(z)$, which is defined as follows
 \begin{align*}
 	R^{f}_N(z):=|f(z)|+\sum_{n=N}^{\infty}|a_n||z|^n.
 \end{align*}

It can be easily seen that $|S_N(z)|=|f(z)-\sum_{n=N}^{\infty}a_nz^n|\leq R^{f}_N(z)$. Moreover, the Bohr-Rogosinski sum $R^{f}_N(z)$ has a connection to the classical Bohr sum (Majorant series), where $N=1$  and $f(0)$ is substituted with $f(z)$. For an analytic function $f$ in $\mathbb{D}$ with $|f(z)|\leq 1$ in $\mathbb{D}$,  Kayumov and Ponnusamy \cite{Kayumov-Ponnusamy} introduced the Bohr-Rogosinski radius as the greatest $r\in(0,1)$ such that $|R^{f}_N(z)|\leq 1$ for $|z|\leq r$, and proved the following result.
\begin{thmB}\cite{Kayumov-Ponnusamy}
	Let $f(z)=\sum_{n=0}^{\infty}a_nz^n$ be an analytic function in $\mathbb{D}$ and $|f(z)|\leq 1$. Then, for each $N\in\mathbb{N}$, we have 
	\begin{align}
		|f(z)|+\sum_{n=N}^{\infty}|a_n||z|^n\leq 1
	\end{align}
	for $|z|=r\leq R_N$, where $R_N$ is the positive root of the equation $2(1+r)r^N-(1-r)^2=0$. The radius $R_N$ is the best possible.\vspace{2mm}
\end{thmB}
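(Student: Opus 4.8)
The plan is to estimate the two pieces of the Bohr--Rogosinski sum separately by classical bounds, reduce the claim to a one-variable inequality in $|a_0|$, and finish with a monotonicity argument; sharpness I would obtain from the Blaschke factors $f_a(z)=(a-z)/(1-az)$ as $a\to1^-$. Write $a=|a_0|\in[0,1]$ and $r=|z|$. If $a=1$ then $f$ is a unimodular constant and there is nothing to prove, so assume $a<1$. Since $f\in\mathcal{B}$ I would invoke two standard facts: the Schwarz--Pick estimate $|f(z)|\le (a+r)/(1+ar)$, and the classical coefficient inequality $|a_n|\le 1-a^2$ for every $n\ge 1$ (obtained by writing $f$ as a M\"{o}bius transform of a Schwarz-type function vanishing at the origin and applying the Schwarz lemma).

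Summing the geometric series, these two bounds give
\[
|f(z)|+\sum_{n=N}^{\infty}|a_n|r^n\le\frac{a+r}{1+ar}+\frac{(1-a^2)r^N}{1-r},
\]
so it suffices to show the right-hand side does not exceed $1$. Subtracting, this is equivalent to $\dfrac{(1-a^2)r^N}{1-r}\le 1-\dfrac{a+r}{1+ar}=\dfrac{(1-a)(1-r)}{1+ar}$, and after cancelling the positive factor $1-a$ it becomes
\[
(1+a)(1+ar)\,r^N\le(1-r)^2 .
\]
For fixed $r\in(0,1)$ the left-hand side is increasing in $a$ on $[0,1]$, hence bounded above by its value at $a=1$, namely $2(1+r)r^N$; so it is enough to prove $2(1+r)r^N\le(1-r)^2$. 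Put $g(r):=2(1+r)r^N-(1-r)^2$. Then $g(0)=-1<0$, $g(1)=4>0$, and $g'(r)=2r^{N-1}(r+N+Nr)+2(1-r)>0$ on $(0,1)$, so $g$ is strictly increasing there, has a unique zero $R_N\in(0,1)$, and $g(r)\le0$ precisely for $r\le R_N$. This proves the inequality for $|z|=r\le R_N$.

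For sharpness I would take $f_a(z)=\dfrac{a-z}{1-az}=a-(1-a^2)\sum_{n\ge1}a^{n-1}z^n\in\mathcal{B}$ with $a\in(0,1)$ and evaluate the Bohr--Rogosinski sum at $z=-r$:
\[
R^{f_a}_N(-r)=\frac{a+r}{1+ar}+\frac{(1-a^2)a^{N-1}r^N}{1-ar}
=1+(1-a)\!\left(\frac{(1+a)a^{N-1}r^N}{1-ar}-\frac{1-r}{1+ar}\right).
\]
As $a\to1^-$ the bracket tends to $\bigl(2(1+r)r^N-(1-r)^2\bigr)/\bigl((1-r)(1+r)\bigr)=g(r)/\bigl((1-r)(1+r)\bigr)$, which is strictly positive exactly when $r>R_N$. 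Hence for any $r>R_N$ one may choose $a$ close enough to $1$ that $R^{f_a}_N(-r)>1$, so $R_N$ is best possible. The analytic core is routine; the point requiring care is the sharpness step, since the two bounds used above (on $|f(z)|$ and on $|a_n|$) cannot be attained simultaneously by any single function, so there is no genuine extremal $f$. The resolution is precisely to factor $(1-a)$ out of $R^{f_a}_N(-r)-1$ \emph{before} letting $a\to1^-$, which is what makes the limiting inequality detect the threshold $R_N$; one should also note the elementary verifications that $g$ has a unique root and that the left-hand side of the displayed inequality is monotone in $a$.
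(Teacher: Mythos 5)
Your proof is correct: the Schwarz--Pick bound on $|f(z)|$, the Wiener coefficient estimate $|a_n|\le 1-|a_0|^2$, the reduction to $(1+a)(1+ar)r^N\le(1-r)^2$ via monotonicity in $a$, and the sharpness computation with $f_a$ at $z=-r$ (factoring out $1-a$ before letting $a\to1^-$) all check out, and this is essentially the original Kayumov--Ponnusamy argument. Note that the paper itself states Theorem B only as a cited result and offers no proof, so there is nothing to compare against beyond confirming that your argument is sound.
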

Recently, in \cite{Hamada-Honda-Kohr-AAMP-2025}, Hamada \emph{et al.} investigated the Bohr–Rogosinski radius for holomorphic mappings on
the unit ball of a complex Banach space with values in a higher dimensional complex Banach space. Also. they obtained the Bohr–Rogosinski radius for a class of subordinations on the unit ball of a complex Banach space. \vspace{2mm}

We recall here the differential subordination technique, an important tool that significantly simplifies the solution of numerous problems in geometric function theory. Suppose $f$ and $g$ are analytic functions in $\mathbb{D}$. We say $f$ is subordinate to $g$, denoted by  $f\prec g$, if there exists a Schwarz function $w\in\mathcal{B}$ such that $w(0)$ and $f(z)=g(w(z))$ for every $z\in\mathbb{D}$. It is well-known that if $g$ is univalent in $\mathbb{D}$, then the subordination $f\prec g$ holds if and only if $f(0)=g(0)$ and  $f(\mathbb{D})\subset g(\mathbb{D})$. Moreover, it is clear that the subordination $f\prec g$ implies $|f^{\prime}(0)|\leq |g^{\prime}(0)|$. For further details and results on subordination classes, we refer to \cite[Chapter 6]{Duren-1983}.\vspace{2mm}

\subsection{Distance formulation of the Bohr inequality:}

The study of the Bohr phenomenon in the context of functions defined by subordination was initiated by Abu-Muhanna \cite{Abu-Muhanna-CVEE-2010}. In this article, we represent the class of all analytic functions $f$ in $\mathbb{D}$ that are subordinate to a given univalent function $g$ using the notation $S(g):=\{f:f\prec g\}$. The family $S(g)$ is said to exhibit the Bohr phenomenon if there exists a radius $r_g\in(0,]$ such that, for any $f(z)=\sum_{n=0}^{\infty}a_nz^n\in S(g)$, it follows that
\begin{align}\label{eq-1.3}
	\sum_{n=1}^{\infty}|a_n|r^n\leq d(g(0),\partial g(\mathbb{D}))\;\; \mbox{for all}\;\; |z|=r\leq r_g,
\end{align}
where $d(g(0),\partial g(\mathbb{D}))$ is the Euclidean distance between $g(0)$ and the boundary of $g(\mathbb{D})$.\vspace{2mm}

The Bohr radius for the family $S(g)$ is the largest value of $r_g$ for which the Bohr phenomenon holds. Similarly, the family $S(g)$ is considered to demonstrate the Bohr-Rogosinski phenomenon if there exists $r_{N,g}\in(0,1]$ such that for all $f(z)=\sum_{n=0}^{\infty}a_nz^n\in S(g)$, the inequality
\begin{align}\label{eq-1.4}
	|f(z)|+\sum_{n=N}^{\infty}|a_n|r^n\leq |g(0)|+d(g(0),\partial g(\mathbb{D}))\;\; \mbox{for all}\;\; |z|=r\leq r_{N, g}.
\end{align}
The largest $r_{N,g}$ is known as the Bohr-Rogosinski radius.\vspace{2mm}

If $g(z)=(a-z)/(1-\bar{a}z)$  with $|a|\leq 1$, it follows that $g(\mathbb{D})=\mathbb{D}$,  $S(g)=\mathcal{B}$, and  $d(g(0),g(\mathbb{D})=1-|g(0)|=1-|a|$. Therefore, using \eqref{eq-1.1}, the inequality \eqref{eq-1.3} holds for $|z|\leq 1/3$, and \eqref{eq-1.4} is true with $r_{N,g}=R_N$  according to Theorem A.
\subsection{Harmonic Mappings:}
Suppose $f=u+iv$ is a complex-valued function in a simply connected domain $\Omega$. The function $f$ is said to be harmonic in $\Omega$ if it satisfies the Laplace equation $\nabla f=4f_{z\bar{z}}=0$, implying that the real and imaginary parts, $u$ and $v$, are harmonic in $\Omega$. Note that any harmonic mapping $f$ can be expressed in the form  $f=h+\bar{g}$, where $h$ and $g$ are analytic functions in $\Omega$. The part $h$ is known as the analytic part and $g$ is called the co-analytic part of $f$, with $\overline{g(z)}$ representing the complex conjugate of $g(z)$. This representation is unique up to an additive constant (see \cite{Duren-2004}). According to the inverse function theorem and Lewy's result \cite{Lewy-BAMS-1936}, a harmonic function  $f$ is locally univalent in $\Omega$ if and only if its Jacobian, given by $J_f(z):=|h^{\prime}(z)|^2-|g^{\prime}(z)|^2$, is non-zero in $\Omega$. A harmonic function $f$ is said to be locally univalent and sense-preserving in $\Omega$ if and only if the Jacobian $J_f(z)$ remains positive in $\Omega$. This is equivalent to $h^{\prime}(z)\neq 0$ in $\Omega$ and the dilatation $\omega_f:=\omega=g^{\prime}/h^{\prime}$ of $f$ satisfying $|\omega_f|<1$ (see \cite{Lewy-BAMS-1936}).
\subsection{K-quasiconformal Harmonic Mappings:}

A harmonic function $f=h+\bar{g}$ is called $K$-quasiconformal harmonic in $\mathbb{D}$ if it is locally univalent, sense-preserving, and satisfies $|\omega_f(z)|\leq k<1$ for all $z\in\mathbb{D}$, where $K=(1+k)/(1-k)\geq 1$ (see \cite{Kalaj-MZ-2008,Martio-AASFAI-1968}). It is evident that $k\to 1$ corresponds to $K\to\infty$. In $2018$, Ponnusamy \emph{et al.}\cite{Kayumov-Ponnusamy-Shakirov-MN-2018} considered the class of all sense-preserving harmonic mappings $f=h+\bar{g}$ of the unit disk $\mathbb{D}$, where $h$ and $g$ are analytic with $g(0) = 0$, and determined the Bohr radius under certain conditions imposed on $h$ and $g$. In their $2019$ study, Liu and Ponnusamy \cite{Liu-Ponnusamy-BMMSS-2019} introduced the Bohr radius for $K$-quasiconformal, sense-preserving harmonic functions $f=h+\bar{g}$ defined in the unit disk $\mathbb{D}$, where $h$ is subordinate to some analytic function $\phi$, and posed two conjectures. In a significant contribution, Liu et al.\cite{Liu-Ponnusami-Wang-2020} investigated the Bohr radius for sense-preserving $K$-quasiregular harmonic mappings in $\mathbb{D}$, where $h(z)-h(0)$ is quasi-subordinate to a given analytic function. They not only established sharper forms of four theorems by Liu and Ponnusamy\cite{Liu-Ponnusamy-BMMSS-2019}, but also settled the two conjectures proposed by those authors. Furthermore, Ahamed and Ahammed\cite{Ahamed-Ahammed-MJM-2024} recently derived an improved version of the Bohr inequality for $K$-quasiregular harmonic mappings in the shifted disk 
\begin{align*}
	\Omega_{\gamma}=\left\{z\in\mathbb{C}: \left|z+\frac{\gamma}{1-\gamma}\right|<\frac{1}{1-\gamma},\; \gamma\in[0, 1)\right\}
\end{align*} 
which includes the unit disk $\mathbb{D}$. They also established sharp Bohr inequalities for a class of $K$-quasiconformal harmonic mappings with bounded analytic parts.\vspace{2mm}

In $2019$, Liu and Ponnusamy \cite{Liu-Ponnusamy-BMMSS-2019} derived results for finding the Bohr radii of harmonic mappings in the unit disk $\mathbb{D}$, where the analytic part is subordinate to a given analytic function.
\begin{thmC}\cite{Liu-Ponnusamy-BMMSS-2019}
	Suppose that $f(z)=h(z)+\overline{g(z)}=\sum_{n=0}^{\infty}a_nz^n+\overline{\sum_{n=1}^{\infty}b_nz^n}$ is a sense-preserving $K$-quasiconformal harmonic mapping in $\mathbb{D}$ and $h\prec\phi$, where $\phi$ is univalent and convex in $\mathbb{D}$. Then
	\begin{align*}
		\sum_{n=1}^{\infty}(|a_n|+|b_n|)r^n\leq d(\phi(0),\partial\phi(\mathbb{D}))\;\; \mbox{for}\;\; r\leq \frac{K+1}{5K+1},
	\end{align*}
	The number $(K+1)/(5K+1)$ is sharp.
\end{thmC}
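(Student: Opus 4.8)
\emph{Strategy.} I would split the majorant series $\sum_{n\ge1}(|a_n|+|b_n|)r^{n}$ into its analytic and co-analytic parts, estimate the analytic part by the two classical facts about subordination to a convex function, estimate the co-analytic part by converting the pointwise dilatation bound $|\omega_f|\le k$ into a bound on the majorant series of $g$, then optimize in $r$ and exhibit a half-plane map as the extremal. Write $\phi(z)=\sum_{n\ge0}A_nz^n$ and $d:=d(\phi(0),\partial\phi(\mathbb D))$. Since $h\prec\phi$ with $\phi$ convex univalent, Rogosinski's coefficient inequality for subordination to a convex majorant gives $|a_n|\le|A_1|$ for every $n\ge1$; summing the geometric series,
\[
\sum_{n=1}^{\infty}|a_n|r^{n}\le|A_1|\,\frac{r}{1-r},\qquad 0\le r<1 .
\]
On the other hand, a convex univalent $\phi$ covers the disc $D\!\left(\phi(0),\tfrac12|A_1|\right)$, so $|A_1|\le 2d$, and therefore $\sum_{n\ge1}|a_n|r^{n}\le 2d\,r/(1-r)$.

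For the co-analytic part, sense-preserving $K$-quasiconformality gives $g'(z)=\omega_f(z)h'(z)$ with $|\omega_f(z)|\le k=(K-1)/(K+1)$ on $\mathbb D$. The key step (the heart of the proof) is that this pointwise bound upgrades to a bound on majorant series,
\[
\sum_{n=1}^{\infty}|b_n|r^{n}\le k\sum_{n=1}^{\infty}|a_n|r^{n},
\]
on the range of $r$ under consideration. One obtains it from the convolution identity $nb_n=\sum_{i=1}^{n}i\,a_i\,\omega_{n-i}$ coming from $g'=\omega_f h'$ (writing $\omega_f(z)=\sum_m\omega_mz^m$); the crude estimate $|\omega_m|\le k$ on every coefficient is too wasteful — it introduces a spurious factor $\sum r^n/n$ — so one must feed in the $L^{2}$-information $\sum_m|\omega_m|^{2}\le k^{2}$ (equivalently, a Littlewood/Parseval argument on $g'$ versus $h'$), and it is exactly this refinement that yields the clean growth $r/(1-r)$ and hence pins down the radius.

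Granting the lemma, combine:
\[
\sum_{n=1}^{\infty}(|a_n|+|b_n|)r^{n}\le(1+k)\sum_{n=1}^{\infty}|a_n|r^{n}\le \frac{2(1+k)\,d\,r}{1-r},
\]
and the right-hand side is $\le d$ exactly when $2(1+k)r\le 1-r$, i.e.\ when $r\le 1/(3+2k)$. Since $k=(K-1)/(K+1)$ gives $3+2k=(5K+1)/(K+1)$, this is precisely $r\le(K+1)/(5K+1)$, which is the assertion.

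For sharpness I would take $\phi(z)=z/(1-z)$, which is convex and univalent onto the half-plane $\{\operatorname{Re}w>-\tfrac12\}$, so $d=\tfrac12$; put $h=\phi$ and $g(z)=kz/(1-z)$, so that $\omega_f\equiv k$, $J_f=(1-k^{2})|h'|^{2}>0$, and $f=h+\bar g$ is a genuine sense-preserving $K$-quasiconformal harmonic mapping with $h\prec\phi$. Here $|a_n|=1$ and $|b_n|=k$ for all $n\ge1$, so $\sum_{n\ge1}(|a_n|+|b_n|)r^{n}=(1+k)\,r/(1-r)$, which equals $d=\tfrac12$ exactly at $r=(K+1)/(5K+1)$ and exceeds it for larger $r$; hence the radius cannot be enlarged. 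The main obstacle is the co-analytic estimate of the second paragraph: everything else is a short computation or a citation, and it is precisely there that the quasiconformality is used and the constant $(K+1)/(5K+1)$ is produced, so the whole argument rests on transferring $|\omega_f|\le k$ to that $\ell^{1}$-type majorant bound.
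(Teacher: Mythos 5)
Your proposal is correct and follows essentially the standard route: the termwise Rogosinski bound $|a_n|\le|\phi'(0)|$ for subordination to a convex function, the covering estimate $|\phi'(0)|\le 2\,d(\phi(0),\partial\phi(\mathbb D))$, and the majorant-series transfer $\sum|b_n|r^n\le k\sum|a_n|r^n$ (the paper's Lemma~\ref{lem-2.2}), followed by the half-plane extremal — which is exactly the machinery the paper itself deploys for its own theorems and the argument behind the cited source. The only point worth making explicit is that Lemma~\ref{lem-2.2} carries the restriction $r\le 1/3$, which is harmless here because $(K+1)/(5K+1)\le 1/3$ for all $K\ge 1$.
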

\begin{thmD}\cite{Liu-Ponnusamy-BMMSS-2019}
	Suppose that $f(z)=h(z)+\overline{g(z)}=\sum_{n=0}^{\infty}a_nz^n+\overline{\sum_{n=1}^{\infty}b_nz^n}$ is a sense-preserving $K$-quasiconformal harmonic mapping in $\mathbb{D}$ and $h\prec\phi$, where $\phi$ is analytic and univalent in $\mathbb{D}$. Then
	\begin{align*}
		\sum_{n=1}^{\infty}(|a_n|+|b_n|)r^n\leq d(\phi(0),\partial\phi(\mathbb{D}))\;\; \mbox{for}\;\; r\leq r_u(k),
	\end{align*}
	where $k=(K-1)/(K+1)$ and $r_u(k)\in(0,1)$ is the positive root of the equation 
	\begin{align*}
		(1-r)^2-4r(1+k\sqrt{1+r})=0.
	\end{align*}
\end{thmD}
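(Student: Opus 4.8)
The plan is to split the majorant series into its analytic and co-analytic parts and bound each against $d:=d(\phi(0),\partial\phi(\mathbb{D}))$. First I would normalize: since $h\prec\phi$ we have $a_0=h(0)=\phi(0)$ and $h-a_0\prec\psi$, where $\psi:=\phi-\phi(0)$ is univalent with $\psi(0)=0$ and $d(0,\partial\psi(\mathbb{D}))=d$. Koebe's one-quarter theorem gives $|\phi'(0)|=|\psi'(0)|\le 4d$, and since $(h-a_0)/\psi'(0)$ is subordinate to $\psi/\psi'(0)\in S$, Rogosinski's theorem on the coefficients of a function subordinate to a member of $S$ (a consequence of de Branges' theorem) yields $|a_n|\le n|\psi'(0)|\le 4nd$ for all $n\ge1$. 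Hence
\[
\sum_{n=1}^{\infty}|a_n|r^n\le 4d\sum_{n=1}^{\infty}nr^n=\frac{4dr}{(1-r)^2}.
\]

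Next I would treat the co-analytic part through the dilatation. Write $\omega:=g'/h'=\sum_{m\ge0}c_mz^m$, so that $|\omega(z)|\le k$ on $\mathbb{D}$ and, since $\omega/k$ is bounded by $1$, $\sum_{m\ge0}|c_m|^2\le k^2$. Comparing coefficients in $g'=\omega h'$ gives $nb_n=\sum_{j=1}^{n}ja_jc_{n-j}$, hence $n|b_n|\le\sum_{j=1}^{n}j|a_j|\,|c_{n-j}|$. Summing over $n$, interchanging the order of summation, and using $j/(m+j)\le1$, I expect the clean product bound
\[
\sum_{n=1}^{\infty}|b_n|r^n\le\Bigl(\sum_{j\ge1}|a_j|r^j\Bigr)\Bigl(\sum_{m\ge0}|c_m|r^m\Bigr)\le\frac{4dr}{(1-r)^2}\cdot\frac{k}{\sqrt{1-r^2}},
\]
the last step combining the estimate above with the Cauchy--Schwarz bound $\sum_{m\ge0}|c_m|r^m\le k\bigl(\sum_{m\ge0}r^{2m}\bigr)^{1/2}=k/\sqrt{1-r^2}$.

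Adding the two estimates, $\sum_{n\ge1}(|a_n|+|b_n|)r^n\le\frac{4dr}{(1-r)^2}\bigl(1+k/\sqrt{1-r^2}\bigr)$. To reach the stated closed form I would then replace $1/\sqrt{1-r^2}$ by the larger quantity $\sqrt{1+r}$; this is legitimate since $1/\sqrt{1-r^2}\le\sqrt{1+r}$ for $r\le(\sqrt5-1)/2$, while $r_u(k)\le r_u(0)=3-2\sqrt2<(\sqrt5-1)/2$ (the map $k\mapsto r_u(k)$ being decreasing). Thus for $r\le r_u(k)$ the sum is at most $\frac{4dr}{(1-r)^2}(1+k\sqrt{1+r})$. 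Finally, with $p(r):=(1-r)^2-4r(1+k\sqrt{1+r})$ one has $p(0)=1>0$, $p(1)=-4(1+k\sqrt2)<0$ and $p'(r)=2r-6-4k\sqrt{1+r}-2rk/\sqrt{1+r}<0$ on $(0,1)$, so $p$ has a unique zero $r_u(k)\in(0,1)$ and $p(r)\ge0$ precisely for $r\le r_u(k)$; for such $r$ this forces $\frac{4dr}{(1-r)^2}(1+k\sqrt{1+r})\le d$, which is the assertion.

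I expect the heart of the argument to be the co-analytic estimate: turning the identity $nb_n=\sum ja_jc_{n-j}$, via the trick $j/(m+j)\le1$, into the product bound $\sum|b_n|r^n\le(\sum|a_j|r^j)(\sum|c_m|r^m)$ is what confines the dilatation's influence to the single factor $k/\sqrt{1-r^2}$ (and hence $k\sqrt{1+r}$) rather than something with a worse power of $1-r$ in the denominator. The de Branges coefficient bound, the two elementary inequalities, and the convergence and rearrangement bookkeeping are all routine; note also that no sharpness is claimed here, which is consistent with the mild loss incurred when passing from $1/\sqrt{1-r^2}$ to $\sqrt{1+r}$.
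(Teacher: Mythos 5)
The paper does not prove Theorem D; it is quoted verbatim from Liu--Ponnusamy \cite{Liu-Ponnusamy-BMMSS-2019} as background, so your argument can only be measured against the original source, and on its own terms it is correct and self-contained. Your treatment of the analytic part (Rogosinski's subordination bound $|a_n|\le n|\phi'(0)|$ combined with Koebe's quarter theorem to get $|a_n|\le 4nd$) is exactly the standard route. Where you genuinely diverge is the co-analytic part: the original proof runs through an $L^2$/Cauchy--Schwarz lemma on $|g'|\le k|h'|$, which is where the factor $\sqrt{1+r}$ arises organically (essentially from $\sum n^2r^n=r(1+r)/(1-r)^3$), whereas you use the pointwise convolution identity $nb_n=\sum_{j=1}^n ja_jc_{n-j}$, the elementary estimate $j/n\le1$ to factor the majorant series, and Parseval on the dilatation to get the factor $k/\sqrt{1-r^2}$. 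This is cleaner and in fact \emph{stronger}: since $1/\sqrt{1-r^2}\le\sqrt{1+r}$ exactly for $r\le(\sqrt5-1)/2$, and $r_u(k)\le r_u(0)=3-2\sqrt2$, your intermediate bound would yield a slightly larger radius (the root of $(1-r)^2-4r(1+k/\sqrt{1-r^2})=0$), and you deliberately weaken it to land on the stated equation --- consistent with the fact that Theorem D carries no sharpness claim and was later improved by Liu--Ponnusamy--Wang via Lemma \ref{lem-2.2}. The only steps a referee would pause on --- the validity of $|a_n|\le n|\phi'(0)|$ for subordination under a merely univalent $\phi$ (it does rest on de Branges), the restriction $r\le(\sqrt5-1)/2$ for the monotone replacement, and the monotonicity of $p$ guaranteeing that $p(r)\ge0$ on $[0,r_u(k)]$ --- are all addressed explicitly and correctly.
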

\section{\bf{Bohr inequality and Bohr-Rogosinski inequality related to the family of concave univalent functions with pole $p$}}
Let the extended complex plane be denoted as $\widehat{\mathbb{C}} := \mathbb{C} \cup {\infty}$. We define $\widehat{C_p}$ as the set of meromorphic univalent functions $f : \mathbb{D} \to \widehat{\mathbb{C}}$ that satisfy the following conditions:
\begin{enumerate}
	\item [(i)] $f$ is analytic in $\mathbb{D}\setminus\{p\}$ and $\widehat{\mathbb{C}}\setminus f(\mathbb{D})$ is convex domain.\vspace{1.3mm}
	
	\item[(ii)] $f$ has a simple pole at the point $p$.
\end{enumerate}
 By applying a suitable rotation, we may assume, without losing generality, that $0 < p < 1$. Each function in $\widehat{C_p}$ is a concave univalent function with a pole at $p \in (0,1)$, and it has the Taylor series representation $f(z) = \sum_{n=0}^{\infty} a_n z^n$ in the disk $\mathbb{D}_p := \{ z \in \mathbb{D} : |z| < p \}$. Let $C_p:=\{f\in\widehat{C_p}:f(0)=f^{\prime}(0)-1=0\}$.\vspace{2mm}
 
 The representation formula for functions in $C_p$ was established by Wirths \cite{Wirths-SMJ-2006} in $2006$. This formulation enabled Bhowmik \emph{et al.} \cite{Bhowmik-Ponnusamy-Wirths-KMJ-2007} to initiate an analysis based on Laurent series expansion around the pole $p$ in 2007, where they derived certain coefficient bounds for the class $\widehat{C_p}$. The class has since been thoroughly analyzed in works such as \cite{Avkhadiev-Pommerenke-Wirths-MN-2004,Avkhadiev-Wirths-FM-2007,Avkhadiev-Wirths-CVEE-2007,Bhowmik-Das-JMAA-2018,Wirths-SMJ-2003}.\vspace{2mm}
 
 In view  of Theorems C or D, it is natural to raise the following questions with regard to the study of the class $\widehat{C}_p$.
 
 \begin{prob}\label{prob-2.1}
 	Let $f=h+\overline{g}$ is a sense-preserving $K$-quasiconformal harmonic mappings in $\mathbb{D}$ whose analytic part is subordinate to a function in the class $\widehat{C}_p$. Can we obtain the sharp Bohr radius for such mapping $f$?
 \end{prob}
 
 \begin{prob}\label{prob-2.2}
 	Can we establish the Bohr-Rogosinski inequality for $K$-quasiconformal harmonic mappings in $\mathbb{D}$ whose analytic part is subordinate to a function in the class $\widehat{C}_p$?
 \end{prob}
 
 In this section, our aim is to give affirmative answer to these Problems. In this regard, we recall here the following Lemmas which will play a key role to prove our main results.
 
 \begin{lem}\label{lem-2.1}\cite{Gangania-Kumar-MJM-2022}
 	Suppose that $h(z)=\sum_{n=0}^{\infty}a_nz^n$ and $g(z)=\sum_{n=0}^{\infty}b_nz^n$ are two analytic functions in $\mathbb{D}$ and $h\prec g$, then for $N\in\mathbb{N}$, we have
 	\begin{align*}
 		\sum_{n=N}^{\infty}|a_n|r^n\leq\sum_{n=N}^{\infty}|b_n|r^n \;\;\; \mbox{holds for}\;\; |z|=r\leq 1/3.
 	\end{align*}
 \end{lem}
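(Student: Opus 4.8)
The plan is to pass to the subordinating Schwarz function and reduce the whole assertion to a single majorant estimate for the powers of a bounded analytic function. Concretely, $h\prec g$ supplies $w\in\mathcal{B}$ with $w(0)=0$ and $h=g\circ w$; expanding $h(z)=\sum_{m\ge 0}b_m\,w(z)^m$ and writing $w(z)^m=\sum_{n\ge m}T_{m,n}z^n$, a comparison of coefficients gives $a_0=b_0$ and $a_n=\sum_{m=1}^{n}b_m T_{m,n}$ for $n\ge 1$. Since every term in sight is nonnegative, applying the triangle inequality and then interchanging the order of summation yields, for $0<r<1$,
\[
\sum_{n=N}^{\infty}|a_n|r^n\ \le\ \sum_{m=1}^{\infty}|b_m|\Bigl(\sum_{n\ge\max\{m,N\}}|T_{m,n}|r^n\Bigr)\ \le\ \sum_{m=1}^{\infty}|b_m|\Bigl(\sum_{n\ge m}|T_{m,n}|r^n\Bigr),
\]
so the lemma comes down to bounding $\sum_{n\ge m}|T_{m,n}|r^n$ for each fixed $m\ge1$.

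The heart of the matter --- and the step I expect to be the main obstacle --- is the claim that $\sum_{n\ge m}|T_{m,n}|r^n\le r^m$ for every $m\ge1$, uniformly for $r\le 1/3$. To prove it I would use the Schwarz lemma, $|w(z)|\le|z|$, to see that $\Phi_m(z):=\bigl(w(z)/z\bigr)^m=\sum_{j\ge0}\gamma_j z^j$ is analytic on $\mathbb{D}$ with $|\Phi_m|\le1$ and $w(z)^m=z^m\Phi_m(z)$; hence $T_{m,m}=\gamma_0=(w'(0))^m=:c$ with $|c|\le1$, and $T_{m,n}=\gamma_{n-m}$ for $n>m$. The classical coefficient bound $|\gamma_j|\le 1-|\gamma_0|^2$ ($j\ge1$) for analytic maps $\mathbb{D}\to\overline{\mathbb{D}}$ then gives $|T_{m,n}|\le 1-|c|^2$ for $n>m$, so summing the geometric tail,
\[
\sum_{n\ge m}|T_{m,n}|r^n\ \le\ |c|\,r^m+(1-|c|^2)\,\frac{r^{m+1}}{1-r},
\]
and an elementary rearrangement shows the right side is $\le r^m$ precisely when $(1+|c|)\,r\le 1-r$, i.e. when $r\le 1/(2+|c|)$; as $|c|\le1$, this holds whenever $r\le 1/3$. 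The two points that need care are the coefficient estimate $|T_{m,n}|\le 1-|w'(0)|^{2m}$ and the verification that the single constant $1/3$ works for all $m$ simultaneously --- it does, the extremal case being $m=1$ with $|w'(0)|\to1$, which also shows $1/3$ is sharp.

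Combining the two steps gives $\sum_{n=N}^{\infty}|a_n|r^n\le\sum_{m=1}^{\infty}|b_m|r^m$ for $r\le 1/3$, which already carries the substance of the lemma. To land on $\sum_{n\ge N}|b_n|r^n$ on the right one must still absorb the low block $\sum_{m=1}^{N-1}|b_m|r^m$; refining the estimate above to $\sum_{n\ge N}|T_{m,n}|r^n\le(1-|w'(0)|^{2m})\,r^{N}/(1-r)$ for $m<N$ shows that block contributes at most $\bigl(r^{N}/(1-r)\bigr)\sum_{m=1}^{N-1}|b_m|$, which vanishes exactly when $g$ has no Taylor coefficients in degrees $1,\dots,N-1$ --- the situation in which the lemma is invoked here. (Without such a restriction the left side need not drop below $\sum_{n\ge1}|b_n|r^n$: for $g(z)=z$, $h(z)=z^2$, $N=2$ one has $\sum_{n\ge2}|a_n|r^n=r^2>0=\sum_{n\ge2}|b_n|r^n$, so the robust form of the statement carries $\sum_{n\ge1}|b_n|r^n$ on the right.) Modulo this bookkeeping point, the argument is complete for $r\le 1/3$.
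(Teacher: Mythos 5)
Your argument is correct as far as it goes, and it is essentially the standard proof (the paper itself gives none, simply citing Gangania--Kumar): pass to the Schwarz function $w$, write $a_n=\sum_{m=1}^{n}b_mT_{m,n}$, and reduce everything to the majorant bound $\sum_{n\ge m}|T_{m,n}|r^n\le r^m$ for $r\le 1/3$, which you establish correctly via the Wiener coefficient estimate $|\gamma_j|\le 1-|\gamma_0|^2$ applied to $\Phi_m=(w(z)/z)^m$; the computation $r\le 1/(2+|c|)\ge 1/3$ is right, and the interchange of summation is harmless since all terms are nonnegative. More importantly, your closing observation is correct and should be stated as the main finding rather than as a ``bookkeeping point'': the lemma as printed is false for $N\ge 2$. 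Your counterexample $g(z)=z$, $h(z)=z^2=g(z^2)$, $N=2$ gives $\sum_{n\ge 2}|a_n|r^n=r^2>0=\sum_{n\ge 2}|b_n|r^n$, so no radius whatsoever can rescue the stated inequality; the obstruction is exactly the terms with $m<N\le n$, i.e.\ the contribution of $b_1,\dots,b_{N-1}$ pushed into high degrees by $w$. What the argument actually delivers is $\sum_{n=N}^{\infty}|a_n|r^n\le\sum_{n=1}^{\infty}|b_n|r^n$ for $r\le 1/3$, and the stated form only under the extra hypothesis $b_1=\cdots=b_{N-1}=0$. This defect does not propagate into the rest of the paper: every invocation of the lemma (in the proofs of Theorems 2.1, 2.2, 3.1 and 3.2) is with $N=1$, where the two right-hand sides coincide and your proof establishes the claim in full. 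So the genuine gap here lies in the statement being proved, not in your proof of it.
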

 \begin{lem}\label{lem-2.2}\cite{Alkhaleefah-Kayumov-Ponnusamy-PAMS-2019,Liu-Ponnusami-Wang-2020}
 	Suppose that $h(z)=\sum_{n=0}^{\infty}a_nz^n$ and $g(z)=\sum_{n=0}^{\infty}b_nz^n$ are two analytic functions in $\mathbb{D}$. If $|g^{\prime}(z)|\leq k|h^{\prime}(z)|$ in $\mathbb{D}$ for some $k\in(0,1]$, then
 	\begin{align*}
 		\sum_{n=1}^{\infty}|b_n|r^n\leq k\sum_{n=1}^{\infty}|a_n|r^n \;\; \mbox{for}\;\; |z|=r\leq 1/3.
 	\end{align*}
 \end{lem}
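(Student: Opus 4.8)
The plan is to reduce the whole statement to Bohr's inequality (Theorem A) applied once to a single auxiliary bounded function; this is exactly what makes the number $1/3$ enter. First I would dispose of the degenerate case $h'\equiv 0$: then $h$ is constant, so $a_n=0$ for all $n\ge 1$, the hypothesis $|g'|\le k|h'|$ forces $g'\equiv 0$, hence $b_n=0$ for all $n\ge 1$, and the asserted inequality holds trivially. So assume from now on that $h'\not\equiv 0$.

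Next I would produce the factorization $g'=\psi\,h'$ with $\psi$ analytic on $\mathbb{D}$ and $|\psi(z)|\le k$ there. Define $\psi:=g'/h'$ on the open set $\{z\in\mathbb{D}:h'(z)\ne 0\}$, where it is analytic and satisfies $|\psi|\le k$. At any zero $z_0$ of $h'$, say of order $m\ge 1$, the bound $|g'|\le k|h'|\le|h'|$ forces $g'$ to vanish at $z_0$ to order at least $m$, so the isolated singularity of $\psi$ at $z_0$ is removable. Thus $\psi$ extends to a function analytic on all of $\mathbb{D}$, with $|\psi|\le k$ on $\mathbb{D}$ by continuity (equivalently, by the maximum principle). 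Since $k\in(0,1]$, the function $\psi/k$ lies in $\mathcal{B}$, so Theorem A gives $\sum_{m=0}^{\infty}|\psi_m|\,r^m\le k$ for $r\le 1/3$, where $\psi(z)=\sum_{m=0}^{\infty}\psi_m z^m$.

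Then I would compare Taylor coefficients in $g'=\psi h'$. Writing $h'(z)=\sum_{\ell\ge 1}\ell a_\ell z^{\ell-1}$ and $g'(z)=\sum_{n\ge 1}n b_n z^{n-1}$ and equating the coefficient of $z^{n-1}$ on both sides yields $n b_n=\sum_{\ell=1}^{n}\psi_{n-\ell}\,\ell\,a_\ell$, so $|b_n|\le\frac{1}{n}\sum_{\ell=1}^{n}|\psi_{n-\ell}|\,\ell\,|a_\ell|$. Multiplying by $r^n$, summing over $n\ge 1$, and interchanging the order of summation (legitimate since all terms are nonnegative) gives
\[
\sum_{n=1}^{\infty}|b_n|r^n\le\sum_{\ell=1}^{\infty}\ell|a_\ell|\sum_{n=\ell}^{\infty}\frac{r^n}{n}\,|\psi_{n-\ell}|
=\sum_{\ell=1}^{\infty}\ell|a_\ell|\,r^{\ell}\sum_{j=0}^{\infty}\frac{r^{j}}{j+\ell}\,|\psi_j|.
\]
Since $j+\ell\ge\ell$ for every $j\ge 0$, the inner sum is at most $\frac{1}{\ell}\sum_{j\ge 0}r^{j}|\psi_j|\le k/\ell$ for $r\le 1/3$ by the previous step, and substituting this bound yields $\sum_{n\ge 1}|b_n|r^n\le k\sum_{\ell\ge 1}|a_\ell|r^{\ell}$, as claimed.

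The one genuinely non-routine point is the removable-singularity argument that upgrades the pointwise bound $|g'|\le k|h'|$ to a bona fide factorization $g'=\psi h'$ with $\psi$ analytic and $|\psi|\le k$ on all of $\mathbb{D}$; once that is in hand, the rest is the same coefficient bookkeeping that underlies Lemma~\ref{lem-2.1}, with Bohr's $1/3$ invoked only at the very end, on $\psi$.
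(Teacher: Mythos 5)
The paper does not prove this lemma at all: it is quoted verbatim from the cited references (Alkhaleefah--Kayumov--Ponnusamy and Liu--Ponnusamy--Wang), so there is no internal proof to compare against. Your argument is correct and is essentially the standard one from those sources: factor $g'=\psi h'$ with $\psi$ analytic and $|\psi|\le k$ (your removable-singularity justification at the zeros of $h'$ is sound, since $h'\not\equiv 0$ has isolated zeros and the bound forces $g'$ to vanish to at least the same order), apply Theorem A to $\psi/k$ to get $\sum_j|\psi_j|r^j\le k$ for $r\le 1/3$, and then do the Cauchy-product bookkeeping on $g'=\psi h'$. The only cosmetic difference from the published proof is at the last step: the references typically first deduce $\sum_n n|b_n|r^n\le k\sum_n n|a_n|r^n$ and then integrate $\int_0^r(\cdot)/t\,dt$ to remove the factors of $n$, whereas you bound $1/(j+\ell)$ by $1/\ell$ directly inside the double sum; the two are equivalent.
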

We establish the sharp Bohr inequality for harmonic mappings whose analytic part is subordinate to a function in the class $\widehat{C_p}$, where $p \in (0,1)$. 
\begin{thm}\label{thm-2.1}
Suppose that $f(z)=h(z)+\overline{g(z)}=\sum_{n=0}^{\infty}a_nz^n+\overline{\sum_{n=1}^{\infty}b_nz^n}$ is a sense-preserving $K$-quasiconformal harmonic mapping in $\mathbb{D}$ and $h\prec\phi$, where $\phi\in\widehat{C_p}$ and $p\in(0,1)$. Then
\begin{align*}
	\sum_{n=1}^{\infty}(|a_n|+|b_n|)r^n\leq d(\phi(0),\partial\phi(\mathbb{D}))\;\; \mbox{for}\;\; r\leq r_{k,p},
\end{align*}
where $r_{k,p}\in(0,p)$ is the unique root of the equation 
\begin{align*}
	\left(\frac{2K}{K+1}\right)k_p(r)-\frac{p}{(1+p)^2}=0.
\end{align*}
Each number $r_{k,p}$ is sharp.
\end{thm}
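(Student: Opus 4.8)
The plan is to strip off the three hypotheses one at a time. Write $k=(K-1)/(K+1)\in[0,1)$. Being a sense-preserving $K$-quasiconformal harmonic mapping is exactly the condition $|g'(z)|\le k|h'(z)|$ on $\mathbb{D}$, so Lemma~\ref{lem-2.2} gives $\sum_{n=1}^{\infty}|b_n|r^n\le k\sum_{n=1}^{\infty}|a_n|r^n$ for $r\le 1/3$ (for $K=1$ the dilatation vanishes and $g\equiv 0$, so this is vacuous). Hence
\[
\sum_{n=1}^{\infty}(|a_n|+|b_n|)r^n\le (1+k)\sum_{n=1}^{\infty}|a_n|r^n=\frac{2K}{K+1}\sum_{n=1}^{\infty}|a_n|r^n\;\; \mbox{for}\;\; r\le \tfrac13,
\]
and the task is reduced to bounding $\sum_{n\ge1}|a_n|r^n$.

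Since $\widehat{C_p}$ is invariant under affine maps, put $\psi=(\phi-\phi(0))/\phi'(0)\in C_p$, so that $(h-a_0)/\phi'(0)\prec\psi$; a subordination inequality of the type of Lemma~\ref{lem-2.1} (legitimate on $|z|=r<p$, where Schwarz's lemma keeps the subordinating map inside the disk of holomorphy $\mathbb{D}_p$ of $\phi$) then yields $\sum_{n\ge1}|a_n|r^n\le|\phi'(0)|\sum_{n\ge1}|c_n|r^n$, where $\psi(z)=\sum_{n\ge1}c_nz^n$ on $\mathbb{D}_p$. The coefficient problem for $C_p$ is governed by its ``concave Koebe function'' $k_p$ --- I expect $k_p(z)=pz/[(p-z)(1-pz)]$, obtained by affinely normalizing the Joukowski map $w\mapsto w+w^{-1}$ after carrying $p$ to its pole by a M\"{o}bius automorphism of $\mathbb{D}$ --- so that $\sum_{n\ge1}|c_n|r^n\le k_p(r)$ for $0\le r<p$, with equality for $\psi=k_p$; and the sharp covering estimate for $C_p$ is $d(0,\partial\psi(\mathbb{D}))\ge p/(1+p)^2$, again attained by $k_p$ (which maps $\mathbb{D}$ onto the complement of a segment). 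Combining,
\[
\sum_{n=1}^{\infty}(|a_n|+|b_n|)r^n\le\frac{2K}{K+1}\,|\phi'(0)|\,k_p(r)\le|\phi'(0)|\,\frac{p}{(1+p)^2}\le d(\phi(0),\partial\phi(\mathbb{D})),
\]
the middle step being valid exactly when $\tfrac{2K}{K+1}k_p(r)\le p/(1+p)^2$. As $k_p$ is strictly increasing on $(0,p)$ with $k_p(0)=0$ and $k_p(r)\to\infty$ as $r\to p^-$, there is a unique $r_{k,p}\in(0,p)$ with $\tfrac{2K}{K+1}k_p(r_{k,p})=p/(1+p)^2$, and the inequality holds for $r\le r_{k,p}$; since $k_p(r)\ge r$, one has $r_{k,p}<1/4<1/3$, so Lemmas~\ref{lem-2.1}--\ref{lem-2.2} were applied within their range.

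For sharpness, take $\phi=k_p$, so that $d(\phi(0),\partial\phi(\mathbb{D}))=p/(1+p)^2$. Because $k_p$ has a pole at $p$ and so cannot itself be the analytic part of a harmonic mapping of $\mathbb{D}$, choose conformal maps $\omega_j$ of $\mathbb{D}$ onto $\mathbb{D}$ minus a small slit issuing from $p$, normalized by $\omega_j(0)=0$, $\omega_j'(0)>0$, so that $\omega_j\to\mathrm{id}$ locally uniformly as the slit contracts to $\{p\}$ while $p\notin\omega_j(\mathbb{D})$. Then $h_j:=k_p\circ\omega_j$ is holomorphic on $\mathbb{D}$ with $h_j\prec k_p$, and setting $g_j'=k\,h_j'$, $g_j(0)=0$, makes $f_j=h_j+\overline{g_j}$ a sense-preserving $K$-quasiconformal harmonic mapping with $|b_n(h_j)|=k|a_n(h_j)|$; since $a_n(h_j)\to c_n(k_p)$ for every $n$, we get $\sum_{n\ge1}(|a_n(h_j)|+|b_n(h_j)|)r^n\to\tfrac{2K}{K+1}k_p(r)$ for each $r<p$, and this limit exceeds $p/(1+p)^2=d(\phi(0),\partial\phi(\mathbb{D}))$ whenever $r>r_{k,p}$. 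Hence $r_{k,p}$ is best possible.

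The main obstacle is the pair of sharp facts about $C_p$ used above --- the majorant bound $\sum|c_n|r^n\le k_p(r)$ and the covering bound $d(0,\partial\psi(\mathbb{D}))\ge p/(1+p)^2$ --- and in particular the coincidence that one function $k_p$ realizes both (without which the combined inequality would not be sharp); these have to be extracted from Wirths's representation of $C_p$ \cite{Wirths-SMJ-2006} and the known coefficient and covering estimates for the class. A secondary technical point is to justify the subordination step on the whole range $r\le r_{k,p}$, which for small $p$ exceeds $p/3$, rather than only after rescaling the disk by $p$.
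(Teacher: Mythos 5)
Your proposal follows essentially the same route as the paper: Lemma~\ref{lem-2.2} handles the co-analytic part, the subordination lemma together with the coefficient bound $|c_n|\le|\phi'(0)|(1-p^{2n})/((1-p^2)p^{n-1})$ reduces the analytic part to $|\phi'(0)|k_p(r)$, the covering estimate $d(\phi(0),\partial\phi(\mathbb{D}))\ge (p/(1+p)^2)|\phi'(0)|$ closes the chain, and monotonicity of $k_p$ on $(0,p)$ identifies the unique root of $\tfrac{2K}{K+1}k_p(r)=p/(1+p)^2$. Your two refinements --- verifying $r_{k,p}\le k_p(r_{k,p})\le p/(1+p)^2\le 1/4<1/3$ so that the $r\le 1/3$ lemmas really cover the whole range, and approximating $k_p$ (whose pole at $p\in\mathbb{D}$ prevents it from literally being the analytic part of a harmonic mapping of $\mathbb{D}$) by $k_p\circ\omega_j$ in the sharpness step --- are both points the paper passes over silently, taking $h=\phi=k_p$ directly as the extremal function.
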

\begin{proof}[\bf Proof of Theorem \ref{thm-2.1}]
	For a given $\phi\in\widehat{C_p}$ with $\phi(z)=\sum_{n=0}^{\infty}c_nz^n$, let the Koebe transform of $\phi$,
	\begin{align*}
		F(z)=\frac{\phi\left(\frac{z+a}{1+\bar{a}z}\right)-\phi(a)}{(1-|a|^2)\phi^{\prime}(a)}
	\end{align*}
	for any $a\in\mathbb{D}\setminus\{p\}$. Since $e^{-it}F(ze^{it})\in C_{|\frac{p-a}{1-\bar{a}p}|}$ for some $t\in\mathbb{R}$, it follows from \cite[Chapter 15, p. 137]{Avkhadiev-Wirths-2009} that 
	\begin{align}\label{eq-2.1}
		d(\phi(0),\partial\phi(\mathbb{D}))\geq(p/(1+p)^2)|\phi^{\prime}(0)|.
	\end{align}
	As $F(z)=(\phi(z)-\phi(0))/\phi^{\prime}(0)$ and using the inequality \eqref{eq-2.1}, we have (see \cite[p. 26]{Jenkins-MMJ-1962})
	\begin{align}\label{eq-2.2}
		|c_n|\leq|\phi^{\prime}(0)|\frac{1-p^{2n}}{(1-p^2)p^{n-1}}\leq,\;\;n\geq 1.
	\end{align}
	As $h\prec\phi$ and $\phi\in\widehat{C_p}$, $p\in(0,1)$, by applying Lemma \ref{lem-2.1} and inequality \eqref{eq-2.2}, we have
	\begin{align}\label{eq-2.3}
		\sum_{n=1}^{\infty}|a_n|r^n\leq\sum_{n=1}^{\infty}|c_n|r^n&\leq|\phi^{\prime}(0)|\sum_{n=1}^{\infty} \frac{1-p^{2n}}{(1-p^2)p^{n-1}}r^n\\&=|\phi^{\prime}(0)|k_p(r)\;\;\mbox{for}\;\; |z|=r\leq 1/3\nonumber.
	\end{align}
	
	Since $f$ is $K$-quasiconformal sense-preserving harmonic mapping 
	on $\mathbb{D}$, by applying Schwarz’s Lemma, we get that the dilatation $\omega = g^{\prime}/h^{\prime}$ is analytic in $\mathbb{D}$ and $|\omega(z)|=|g^{\prime}(z)/h^{\prime}(z)|\leq k$ in $\mathbb{D}$, where $K=(1+k)/(1-k)\geq 1$, $k\in[0, 1)$. Therefore, by Lemma \ref{lem-2.2}, we have
	
	\begin{align}\label{eq-2.4}
		\sum_{n=1}^{\infty}|b_n|r^n\leq k\sum_{n=1}^{\infty}|a_n|r^n\leq k|\phi^{\prime}(0)|k_p(r)\;\;\mbox{for}\;\; |z|=r\leq 1/3.
	\end{align}
	By using \eqref{eq-2.3} and \eqref{eq-2.4}, we have 
	\begin{align}\label{eq-2.5}
		B_1(r):=\sum_{n=1}^{\infty}(|a_n|+|b_n|)r^n&\leq(k+1)|\phi^{\prime}(0)|k_p(r)\\&\leq(k+1)\frac{(1+p)^2}{p}d(\phi(0),\partial\phi(\mathbb{D}))k_p(r)\nonumber.
	\end{align}
	Let
	\begin{align*}
		G_1(r):=(k+1)k_p(r)-\frac{p}{(1+p)^2}.
	\end{align*}
	Then, we see that
	\begin{align*}
		G^{\prime}_1(r)=(k+1)k^{\prime}_p(r)>0\;\; \mbox{for}\;\; r\in(0,p)
	\end{align*}
	This implies that $G$ is strictly increasing function of $r\in(0,p)$. Also, we see that
	\begin{align*}
		G_1(0)=-\frac{p}{(1+p)^2}\;\;\mbox{and}\;\;\lim_{r\to p}G_1(r)=\infty.
	\end{align*}
	  It follows that the equation $G_1(r)=0$ has a unique positive root $r_{k,p}\in(0,p)$. Hence, it follows from \eqref{eq-2.5} that $B_1(r)\leq d(\phi(0),\partial\phi(\mathbb{D}))$ for $r\leq r_{k,p}$, where $r_{k,p}$ is the positive root of the equation $G_1(r)=0$.\vspace{2mm}
	
	For the sharpness of the result, we consider the function $f(z)=h(z)+\overline{g(z)}$ in $\mathbb{D}$ such that 
	\begin{align*}
		h(z)=\phi(z)=k_p(z)\;\; \mbox{and}\;\; g(z)=k\lambda k_p(z),
	\end{align*} 
	where $|\lambda|=1$ and $k=(K-1)/(K+1)$. For the function $k_p$, it is well-known that $\widehat{C}\setminus k_p(\mathbb{D})=[-p/(1-p)^2, -p/(1+p)^2]$ (see \cite[p. 137]{Avkhadiev-Wirths-2009}) and hence we obtain $d(\phi(0),\partial\phi(\mathbb{D}))=p/(1+p)^2$. Thus we have
	\begin{align*}
		\sum_{n=1}^{\infty}\left(\left|\frac{1-p^{2n}}{(1-p^2)p^{n-1}}\right|+\left|\frac{k\lambda(1-p^{2n})}{(1-p^2)p^{n-1}}\right|\right)r^n&=(1+k)\sum_{n=1}^{\infty}\frac{1-p^{2n}}{(1-p^2)p^{n-1}}r^n\\&=(1+k)k_p(r)\\&>\frac{p}{(1+p)^2}=d(\phi(0),\partial\phi(\mathbb{D}))
	\end{align*}
	for $r>r_{k,p}$, where $r_{k,p}$ is the positive root of the equation $G_1(r)=0$. This shows that $r_{k,p}$ is the best possible.
\end{proof}
\begin{rem}
 When we take $K=1$ (that is, $k=0$), we find that 
	\begin{align*}
		r_{0,p}=(1+1/p+p)-(\sqrt{p}+1/\sqrt{p})\sqrt{p+1/p}
	\end{align*}
	is the root in the interval $(0,p)$ of the equation $pr^2-2(1+p+p^2)r+p=0$. This indicates that the result in \cite[Corollary 1]{Bhowmik-Das-JMAA-2018} is a special case of Theorem \ref{thm-2.1}.
\end{rem}
In the following, we establish the Bohr-Rogosinski inequality for harmonic mappings whose analytic part is subordinate to a function in the class $\widehat{C_p}$, where $p \in (0,1)$.
 
\begin{thm}\label{thm-2.2}
	Suppose that $f(z)=h(z)+\overline{g(z)}=\sum_{n=0}^{\infty}a_nz^n+\overline{\sum_{n=1}^{\infty}b_nz^n}$ is a sense-preserving $K$-quasiconformal harmonic mapping in $\mathbb{D}$ and $h\prec\phi$, where $\phi\in\widehat{C_p}$ and $p\in(0,1)$. Then
	\begin{align*}
		|h(z)|+\sum_{n=1}^{\infty}(|a_n|+|b_n|)r^n\leq d(\phi(0),\partial\phi(\mathbb{D}))\;\; \mbox{for}\;\; r\leq r^*_{k,p},
	\end{align*}
	where $r^*_{k,p}\in(0,p)$ is the unique root of the equation 
	\begin{align*}
		\left(\frac{3K+1}{K+1}\right)k_p(r)-\frac{p}{(1+p)^2}=0.
	\end{align*}
	Each number $r^*_{k,p}$ is sharp.
\end{thm}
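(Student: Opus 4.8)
The plan is to follow the template of the proof of Theorem \ref{thm-2.1}, modifying it to accommodate the extra term $|h(z)|$. First I would recall that since $h\prec\phi$ with $\phi\in\widehat{C_p}$, the subordination gives $h(z)=\phi(w(z))$ for a Schwarz function $w$, so $|h(z)|=|\phi(w(z))|$. I would estimate $|h(z)|$ by writing $h(z)=\sum_{n=0}^{\infty}a_nz^n$ and noting that, since $\phi\in\widehat{C_p}$ contributes only the distance $d(\phi(0),\partial\phi(\mathbb{D}))$ on the right-hand side, the natural move is to bound $|h(z)-h(0)|\leq\sum_{n=1}^{\infty}|a_n|r^n$ and then use $|h(0)|=|a_0|=|\phi(0)|$. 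However, to keep the right-hand side as just $d(\phi(0),\partial\phi(\mathbb{D}))$ (rather than $|\phi(0)|+d(\phi(0),\partial\phi(\mathbb{D}))$ as in the general Bohr–Rogosinski formulation \eqref{eq-1.4}), one normalizes by taking $\phi(0)=0$, which is permissible for the extremal class $C_p$; then $|h(z)|=|h(z)-h(0)|\leq\sum_{n=1}^{\infty}|a_n|r^n$, and by \eqref{eq-2.3} this is at most $|\phi^{\prime}(0)|k_p(r)$ for $r\leq 1/3$.

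Next I would combine the three pieces. Exactly as in \eqref{eq-2.3} and \eqref{eq-2.4}, Lemma \ref{lem-2.1} together with the coefficient bound \eqref{eq-2.2} gives $\sum_{n=1}^{\infty}|a_n|r^n\leq|\phi^{\prime}(0)|k_p(r)$, and Lemma \ref{lem-2.2} applied to the dilatation $\omega=g^{\prime}/h^{\prime}$ with $|\omega|\leq k$ gives $\sum_{n=1}^{\infty}|b_n|r^n\leq k|\phi^{\prime}(0)|k_p(r)$, both valid for $r\leq 1/3$. Adding the bound for $|h(z)|$ yields
\begin{align*}
	|h(z)|+\sum_{n=1}^{\infty}(|a_n|+|b_n|)r^n\leq(1+1+k)|\phi^{\prime}(0)|k_p(r)=(k+2)|\phi^{\prime}(0)|k_p(r)
\end{align*}
for $r\leq 1/3$. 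Using \eqref{eq-2.1} in the form $|\phi^{\prime}(0)|\leq((1+p)^2/p)\,d(\phi(0),\partial\phi(\mathbb{D}))$, this is at most $(k+2)\frac{(1+p)^2}{p}d(\phi(0),\partial\phi(\mathbb{D}))\,k_p(r)$. Writing $k=(K-1)/(K+1)$ we have $k+2=(3K+1)/(K+1)$, so the desired inequality holds as soon as $\big((3K+1)/(K+1)\big)k_p(r)\leq p/(1+p)^2$.

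Then I would introduce $G_2(r):=\big((3K+1)/(K+1)\big)k_p(r)-p/(1+p)^2$ and argue, precisely as for $G_1$ in Theorem \ref{thm-2.1}, that $k_p$ is strictly increasing on $(0,p)$ with $k_p(0)=0$ and $k_p(r)\to\infty$ as $r\to p^-$; hence $G_2$ is strictly increasing, $G_2(0)=-p/(1+p)^2<0$, and $\lim_{r\to p^-}G_2(r)=\infty$, so $G_2$ has a unique root $r^*_{k,p}\in(0,p)$, and the inequality holds for $r\leq r^*_{k,p}$. One should also check that $r^*_{k,p}\leq 1/3$ so that the applications of Lemmas \ref{lem-2.1} and \ref{lem-2.2} are legitimate; this follows because $r^*_{k,p}<p$ and, more to the point, because $k_p(1/3)\geq p/(1+p)^2$ would need to be verified — I expect this to hold for all $p\in(0,1)$ since $k_p$ grows rapidly near $p$, but this is the one place where a small auxiliary estimate is genuinely needed rather than a routine copy of the previous proof. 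Finally, for sharpness I would take the same extremal function as in Theorem \ref{thm-2.1}, namely $h(z)=\phi(z)=k_p(z)$ and $g(z)=k\lambda k_p(z)$ with $|\lambda|=1$; then $|h(z)|$ evaluated along $z=r$ (where $k_p(r)>0$) contributes exactly $k_p(r)$, so the left-hand side equals $(k+2)k_p(r)=((3K+1)/(K+1))k_p(r)$, which exceeds $d(\phi(0),\partial\phi(\mathbb{D}))=p/(1+p)^2$ precisely when $r>r^*_{k,p}$, establishing that $r^*_{k,p}$ is best possible. The main obstacle, such as it is, is the bookkeeping around the normalization $\phi(0)=0$ and verifying $r^*_{k,p}\leq 1/3$; the rest is a direct adaptation of the proof of Theorem \ref{thm-2.1}.
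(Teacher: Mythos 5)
There is a genuine gap at the normalization step. You claim that one may assume $\phi(0)=0$ ``without loss of generality,'' but this reduction is invalid for the inequality as stated: replacing $\phi$ by $\phi-\phi(0)$ (and hence $h$ by $h-h(0)$) leaves $d(\phi(0),\partial\phi(\mathbb{D}))$ and $\sum_{n\ge1}|a_n|r^n$ unchanged, yet it changes the term $|h(z)|$ on the left-hand side. Since $\widehat{C_p}$ is closed under adding constants, the class in the theorem genuinely contains functions with $|\phi(0)|$ arbitrarily large, and for those your argument gives nothing (indeed the displayed inequality with only $d(\phi(0),\partial\phi(\mathbb{D}))$ on the right then fails, because already $|h(0)|=|\phi(0)|$ can exceed the distance). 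What the paper actually does is carry the constant along: it bounds $|h(z)|\le|\phi(0)|+\sum_{n\ge1}|c_n|r^n$ as in \eqref{eq-2.6}, arrives at \eqref{eq-2.7}, and concludes $B_2(r)\le|\phi(0)|+d(\phi(0),\partial\phi(\mathbb{D}))$ for $r\le r^*_{k,p}$ --- i.e.\ the Bohr--Rogosinski form \eqref{eq-1.4}; the right-hand side printed in the statement of Theorem \ref{thm-2.2} is evidently missing the term $|\phi(0)|$. So your proof establishes only the special case $\phi(0)=0$ of what the paper proves; to repair it you should either prove the $|\phi(0)|+d(\phi(0),\partial\phi(\mathbb{D}))$ version for all $\phi\in\widehat{C_p}$, or state explicitly that the theorem requires $\phi(0)=0$.

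Everything else in your outline agrees with the paper's proof: the identity $k+2=(3K+1)/(K+1)$, the bounds \eqref{eq-2.3}--\eqref{eq-2.4} via Lemmas \ref{lem-2.1} and \ref{lem-2.2}, the monotonicity argument for $G_2$, and the extremal pair $h=k_p$, $g=k\lambda k_p$. Your concern about whether $r^*_{k,p}\le 1/3$ (needed to invoke the two lemmas) is legitimate and is not addressed in the paper either, but it is easy to settle: since $k_p(r)\ge r$ on $(0,p)$, at the root one has $2r^*_{k,p}\le(k+2)k_p(r^*_{k,p})=p/(1+p)^2\le 1/4$, so $r^*_{k,p}\le 1/8<1/3$.
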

\begin{proof}[\bf Proof of Theorem \ref{thm-2.2}]
	By employing arguments similar to those in the proof of Theorem \ref{thm-2.1} and considering Lemmas \ref{lem-2.1} and \ref{lem-2.2} and equation \eqref{eq-2.2}, we obtain the inequalities \eqref{eq-2.3} and \eqref{eq-2.4}. Since $h\prec\phi$, where $\phi\in\widehat{C_p}$ and $p\in(0,1)$, we have 
	\begin{align}\label{eq-2.6}
		|h(z)|\leq&|h(0)|+|h(z)-h(0)|\\\nonumber\leq &|\phi(0)|+|\phi(z)-\phi(0)|\\\nonumber=&|\phi(0)|+\sum_{n=1}^{\infty}|c_n|r^n\\\nonumber\leq& |\phi(0)|+\frac{(1+p)^2}{p}d(\phi(0),\partial\phi(\mathbb{D}))k_p(r)
	\end{align}
	for $|z|=r\leq 1/3$. Therefore, by using equation \eqref{eq-2.6}
	\begin{align}\label{eq-2.7}
		B_2(r):=&|h(z)|+\sum_{n=1}^{\infty}(|a_n|+|b_n|)r^n\\\leq&|\phi(0)|+(k+2)\frac{(1+p)^2}{p}d(\phi(0),\partial\phi(\mathbb{D}))k_p(r).\nonumber
	\end{align}
	Let
	\begin{align*}
		G_2(r):=(k+2)k_p(r)-\frac{p}{(1+p)^2}.
	\end{align*}
	Then
	\begin{align*}
		G^{\prime}_2(r)=(k+2)k^{\prime}_p(r)>0\;\; \mbox{for}\;\; r\in(0,p)
	\end{align*}
	This implies that $G_2$ is strictly increasing function of $r\in(0,p)$. Also, 
	\begin{align*}
		G_2(0)=-\frac{p}{(1+p)^2}\;\;\mbox{and}\;\; \lim_{r\to p}G_2(r)=\infty.
	\end{align*}
	 It follows that the equation $G_2(r)=0$ has a unique positive root $r^*_{k,p}\in(0,p)$. Hence, it follows from \eqref{eq-2.7} that $B_2(r)\leq |\phi(0)|+d(\phi(0),\partial\phi(\mathbb{D}))$ for $r\leq r^*_{k,p}$, where $r^*_{k,p}$ is the positive root of the equation $G_2(r)=0$.\vspace{2mm}
	
	For the sharpness of the result, we consider the function $f(z)=h(z)+\overline{g(z)}$ in $\mathbb{D}$ such that 
	\begin{align*}
		h(z)=\phi(z)=k_p(z)\;\; \mbox{and}\;\; g(z)=k\lambda k_p(z),
	\end{align*} 
	where $|\lambda|=1$ and $k=(K-1)/(K+1)$. It is well-known that $d(\phi(0),\partial\phi(\mathbb{D}))=p/(1+p)^2$. Hence, for $z=r$, we have 
	\begin{align*}
		|k_p(r)|&+	\sum_{n=1}^{\infty}\left(\left|\frac{1-p^{2n}}{(1-p^2)p^{n-1}}\right|+\left|\frac{k\lambda(1-p^{2n})}{(1-p^2)p^{n-1}}\right|\right)r^n\\&=|k_p(r)|+(1+k)\sum_{n=1}^{\infty}\frac{1-p^{2n}}{(1-p^2)p^{n-1}}r^n\\&=(2+k)k_p(r)\\&>\frac{p}{(1+p)^2}\\&=d(\phi(0),\partial\phi(\mathbb{D}))
	\end{align*}
	for $r>r^*_{k,p}$, where $r^*_{k,p}$ is the positive root of the equation $G_2(r)=0$. This shows that $r^*_{k,p}$ is the best possible.
\end{proof}
 \section{\bf{Bohr inequality related to the Ma-Minda class of convex and starlike functions}}
 Let $\mathcal{S}$ denote the class of univalent analytic functions $f$ in $\mathbb{D}$, normalized by $f(0)=0$ and $f^{\prime}(0)=1$. Let us recall two important subclasses of the class $\mathcal{S}$, the class of starlike functions and the class of convex functions. The class of starlike functions on $\mathbb{D}$, denoted by $\mathcal{S}^*$, consists of all functions $f\in\mathcal{S}$ such that the image $f(\mathbb{D})$ forms a domain that is starlike with respect to the origin. An important analytic characterization states that $f\in\mathcal{S}^*$ if and only if ${\rm Re}\;(zf^{\prime}(z)/f(z))\geq 0$ for all $z\in\mathbb{D}$. The class of convex functions, denoted by $\mathcal{C}$, consists of all functions $f\in\mathcal{S}$ that map the unit disk $\mathbb{D}$ onto a convex domain. Analytically, A function $f$ belongs to $\mathcal{C}$ if and only if ${\rm Re}\;(1+zf^{\prime\prime}(z)/f^{\prime}(z))\geq 0$ for all $z\in\mathbb{D}$. The inclusion $\mathcal{C}\subsetneq\mathcal{S^*}\subsetneq\mathcal{S}$ is well established. Moreover, according to Alexander’s theorem \cite[p. 43]{Duren-1983}, a normalized analytic function $f$ in $\mathbb{D}$ belongs to $\mathcal{C}$ if, and only if, $zf^{\prime}(z)\in\mathcal{S^*}$. Following the introduction of these classes, several new ones, like strongly convex functions and close-to-convex functions, have been extensively researched. More information on these can be found in \cite{Kaplan-MMJ-1952,Robertson-AM-1936}.\vspace{2mm}
 
 Following the idea of analytic criteria for convexity and starlikeness, Ma and Minda \cite{Ma-Minda-1992} introduced new subclasses of starlike and convex functions on the unit disk $\mathbb{D}$.
 \begin{defn}\label{def-3.1}\cite{Ma-Minda-1992}
 	Let $\phi:\mathbb{D}\to\mathbb{C}$ be an analytic univalent function with $\phi(0)=1, \phi^{\prime}(0)>0, {\rm Re}(\phi(z))>0$, symmetric with respect to the real axis, and starlike with respect	to $1$. Define the classes
 	\begin{align*}
 		\mathcal{C}(\phi):=\left\{f\in\mathcal{S}:1+\frac{zf^{\prime\prime}(z)}{f^{\prime}(z)}\prec\phi(z)\right\}
 	\end{align*}
 	and 
 	\begin{align*}
 		\mathcal{S}^*(\phi):=\left\{f\in\mathcal{S}:\frac{zf^{\prime}(z)}{f(z)}\prec\phi(z)\right\}.
 	\end{align*}
 	A function $\phi$ is termed as a Ma-Minda function if it satisfies the assumption outlined in Definition \ref{def-3.1}. It is significant to mention that for certain choices of $\phi$, the classes $\mathcal{S^*}(\phi)$and $\mathcal{C}(\phi)$ generate several important subclasses of starlike and convex functions, respectively. For example, if we choose $\phi(z)=(1+Az)/(1+Bz)$, the classes $\mathcal{C}(\phi)$ and $\mathcal{S}^*(\phi)$ become the Janowski convex class $\mathcal{C}(A,B)$ and the Janowski starlike class $\mathcal{S}^*(A,B)$, where $-1\leq B<A\leq 1$, respectively. The concept of these classes was introduced by Janowski in \cite{Janowski-APM-1973,Janowski-BAPSSSMAP-1973}. For $\phi(z)=(1+(1-2\alpha)z)/(1-z)$, where $0\leq\alpha<1$, the class $\mathcal{S}^*(\phi)$ becomes the family  $\mathcal{S}^*(\alpha)$ of starlike functions of order $\alpha$, and $\mathcal{C}(\phi)$ becomes the family  $\mathcal{C}(\alpha)$ of convex functions of order $\alpha$. The Bohr phenomenon for these classes was explored in \cite{Allu-Halder-JMAA-2021,Hamada-JMAA-2021}.
 \end{defn}
 \begin{lem}\label{lem-3.1}\cite{Ma-Minda-1992}
 	Let $\phi$ be a Ma-Minda function. Then there exist unique functions $k_{\phi}$ and $h_{\phi}$ in $\mathcal{S}$ such that 
 	\begin{align*}
 		1+\frac{zk^{\prime\prime}_{\phi}(z)}{k^{\prime}(z)}=\frac{zh^{\prime}_{\phi}(z)}{h_{\phi}(z)}=\phi(z)\;\; \mbox{for}\;\;z\in\mathbb{D}.
 	\end{align*}
 
 \end{lem}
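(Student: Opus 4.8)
\medskip
\noindent\emph{Proof strategy.}
The plan is to \emph{construct} the functions $h_\phi$ and $k_\phi$ explicitly by solving the two defining first-order differential equations, and then to verify that the solutions actually lie in $\mathcal{S}$ by invoking the classical analytic characterizations of starlikeness and convexity together with the Ma--Minda hypothesis $\operatorname{Re}\phi(z)>0$ on $\mathbb{D}$.

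\emph{The starlike function.} First I would rewrite $zh_\phi'(z)/h_\phi(z)=\phi(z)$ as
\[
\frac{d}{dz}\log\!\left(\frac{h_\phi(z)}{z}\right)=\frac{h_\phi'(z)}{h_\phi(z)}-\frac{1}{z}=\frac{\phi(z)-1}{z}.
\]
Since $\phi(0)=1$, the right-hand side has a removable singularity at the origin and is thus holomorphic on the simply connected domain $\mathbb{D}$; integrating along any path from $0$ to $z$ yields the well-defined candidate
\[
h_\phi(z)=z\exp\!\left(\int_0^z\frac{\phi(t)-1}{t}\,dt\right),
\]
which is analytic on $\mathbb{D}$, satisfies $h_\phi(0)=0$ and $h_\phi'(0)=1$, and is nonvanishing on $\mathbb{D}\setminus\{0\}$ because the exponential factor never vanishes. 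Differentiating back recovers $zh_\phi'(z)/h_\phi(z)=\phi(z)$, so $\operatorname{Re}\bigl(zh_\phi'(z)/h_\phi(z)\bigr)=\operatorname{Re}\phi(z)>0$; by the standard starlikeness criterion $h_\phi$ is starlike univalent, hence $h_\phi\in\mathcal{S}^*(\phi)\subset\mathcal{S}$.

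\emph{The convex function.} Next I would pass to $k_\phi$ through Alexander's relation, setting $zk_\phi'(z)=h_\phi(z)$, i.e.
\[
k_\phi(z)=\int_0^z\frac{h_\phi(t)}{t}\,dt,
\]
where the integrand is holomorphic on $\mathbb{D}$ (its value at $0$ being $1$), so $k_\phi$ is analytic with $k_\phi(0)=0$, $k_\phi'(0)=1$, and $k_\phi'(z)=h_\phi(z)/z\neq 0$. Differentiating $zk_\phi'(z)=h_\phi(z)$ gives $k_\phi'(z)+zk_\phi''(z)=h_\phi'(z)$, whence
\[
1+\frac{zk_\phi''(z)}{k_\phi'(z)}=\frac{h_\phi'(z)}{k_\phi'(z)}=\frac{zh_\phi'(z)}{h_\phi(z)}=\phi(z),
\]
so $\operatorname{Re}\bigl(1+zk_\phi''(z)/k_\phi'(z)\bigr)>0$ and the convexity criterion forces $k_\phi$ to be convex univalent, giving $k_\phi\in\mathcal{C}(\phi)\subset\mathcal{S}$. (Equivalently, one integrates $k_\phi''/k_\phi'=(\phi-1)/z$ twice with the normalizations $k_\phi'(0)=1$, $k_\phi(0)=0$.)

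\emph{Uniqueness and main difficulty.} For uniqueness I would argue at the level of the differential equations: if $h$ and $\widetilde h$ both solve $zh'/h=\phi$ with the prescribed normalization, then $\frac{d}{dz}\log(h/\widetilde h)=0$, so $h/\widetilde h$ is constant, which must equal $1$ since $h(z)/z\to 1$ and $\widetilde h(z)/z\to 1$ as $z\to 0$; the same reasoning applied first to $\log k_\phi'$ and then to $k_\phi$ gives uniqueness of $k_\phi$. The only genuinely delicate point is the passage from ``solves the ODE'' to ``belongs to $\mathcal{S}$'': once one has the exponential representation of $h_\phi$ (hence its nonvanishing) and the positivity $\operatorname{Re}\phi>0$, the classical starlikeness and convexity criteria do all the work, and what remains is bookkeeping — the removable singularity of $(\phi(t)-1)/t$ at the origin and the normalizations. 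I therefore expect no structural obstacle here.
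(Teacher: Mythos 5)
Your construction is correct and is essentially the standard argument from Ma--Minda's original paper: the paper under review states this lemma as a citation to \cite{Ma-Minda-1992} and gives no proof of its own, so there is nothing internal to compare against. Your explicit integral representation $h_\phi(z)=z\exp\bigl(\int_0^z\frac{\phi(t)-1}{t}\,dt\bigr)$, the passage to $k_\phi$ via Alexander's relation $zk_\phi'(z)=h_\phi(z)$, the appeal to the analytic criteria $\operatorname{Re}(zh'/h)>0$ and $\operatorname{Re}(1+zk''/k')>0$ using $\operatorname{Re}\phi>0$, and the uniqueness argument from the normalized ODEs are all sound and complete.
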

 	Since $\phi$ is symmetric about the real axis and maps the real line to itself, all its derivatives at 0 will be real numbers, implying that all the Taylor coefficients of $\phi$ are also real. Therefore, by applying Lemma \ref{lem-3.1}, the Taylor expansion of $k_{\phi}$ and $h_{\phi}$ is of the form 
 \begin{align}\label{eq-3.1}
 	k_{\phi}(z)=z+\sum_{n=2}^{\infty}c_nz^n\;\; \mbox{and}\;\; h_{\phi}(z)=z+\sum_{n=2}^{\infty}d_nz^n,
 \end{align}
 where $c_n$ and $d_n$ are real numbers for all $n$. Now, look at the absolute sum of $k_{\phi}$ and $h_{\phi}$, which we shall denote as follows:
 \begin{align}\label{eq-3.2}
 	\hat{k}_{\phi}(r)=r+\sum_{n=2}^{\infty}|c_n|r^n\;\; \mbox{and}\;\;\hat{h}_{\phi}(r)=r+\sum_{n=2}^{\infty}|d_n|r^n.
 \end{align}
  The growth and distortion theorems for the classes $\mathcal{C}(\phi)$ and $\mathcal{S}^*(\phi)$ are provided in the following Lemmas.
  \begin{lem}\label{lem-3.2}\cite{Ma-Minda-1992}
  	Let $f\in\mathcal{C}(\phi)$. Then $f^{\prime}(z)\prec k^{\prime}_{\phi}(z)$ and 
  	\begin{enumerate}
  		\item [(a)] Growth theorem: $-k_{\phi}(-r)\leq|f(z)|\leq k_{\phi}(r)$,\vspace{1.2mm}
  		
  		\item [(b)] Distortion theorem: $k^{\prime}_{\phi}(-r)\leq|f^{\prime}(z)|\leq k^{\prime}_{\phi}(r)$,
  	\end{enumerate}
  	for $|z|=r\in(0,1)$. Equality holds for some $z\neq 0$ if and only if $f$ is a rotation of $k_{\phi}$.
  \end{lem}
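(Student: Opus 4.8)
The plan is to reduce Lemma~\ref{lem-3.2} to a single subordination, $f'\prec k_\phi'$, after observing that $\log k_\phi'$ is a \emph{convex} univalent function. First I would introduce $\Phi(z):=\int_{0}^{z}\frac{\phi(\zeta)-1}{\zeta}\,d\zeta$, which is analytic on $\mathbb{D}$ because $\phi(0)=1$, with $\Phi(0)=0$ and $\Phi'(0)=\phi'(0)>0$. By Lemma~\ref{lem-3.1} we have $1+zk_\phi''(z)/k_\phi'(z)=\phi(z)$, so $(\log k_\phi')'(z)=(\phi(z)-1)/z=\Phi'(z)$ and $\log k_\phi'(0)=0$, whence $k_\phi'(z)=e^{\Phi(z)}$. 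A short computation gives $1+z\Phi''(z)/\Phi'(z)=z\phi'(z)/(\phi(z)-1)$, whose real part is positive precisely because $\phi$ is starlike with respect to $1$; thus $\Phi$ is convex univalent. Since $\phi$ is univalent with real Taylor coefficients and $\phi'(0)>0$, it is strictly increasing on $(-1,1)$, and hence so is $\Phi$ (as $\Phi'(x)=(\phi(x)-1)/x>0$ there).

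The core of the argument is to upgrade $f\in\mathcal{C}(\phi)$ to $f'\prec k_\phi'$. I would write $1+zf''(z)/f'(z)=\phi(w(z))$ with $w$ a Schwarz function, set $p:=\log f'$ (well defined since $f'(0)=1$, $f'\ne 0$), so $p(0)=0$ and $zp'(z)=zf''(z)/f'(z)=\phi(w(z))-1$. As $|w(z)|\le|z|$ and $\phi$ is univalent, $\phi(w(z))-1\prec\phi(z)-1=z\Phi'(z)$, and $z\Phi'(z)$ is a starlike univalent function (the Alexander transform of the convex $\Phi$). Then I would invoke the standard subordination-preservation lemma underlying the Ma-Minda treatment --- that $zp'\prec z\Phi'$, $p(0)=\Phi(0)$ and $\Phi$ convex together force $p\prec\Phi$ (cf. the differential-subordination theory of Miller and Mocanu) --- to conclude $\log f'\prec\Phi$, i.e. $f'\prec k_\phi'$. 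I expect this to be the main obstacle: the naive termwise estimate on the integral representation of $\log f'$ genuinely fails, and the convexity of $\Phi$ is exactly what compensates for a possibly badly shaped $\phi$.

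With $f'\prec k_\phi'$ in hand the remaining steps are routine. I would first record an elementary fact about $\Phi$: for $\rho\in(0,1)$, $\Phi(\mathbb{D}_\rho)$ is a convex domain symmetric about $\mathbb{R}$ meeting $\mathbb{R}$ in $(\Phi(-\rho),\Phi(\rho))$, so reflecting across $\mathbb{R}$ and taking midpoints shows $\Phi(-\rho)\le\real\Phi(z)\le\Phi(\rho)$ whenever $|z|\le\rho$. Writing $f'(z)=k_\phi'(w(z))=e^{\Phi(w(z))}$ with $|w(z)|\le r$ and using the monotonicity of $\Phi$ then gives $k_\phi'(-r)\le|f'(z)|\le k_\phi'(r)$, which is~(b). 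For the upper bound in~(a) I would integrate radially, $|f(z)|=\bigl|\int_{0}^{1}f'(tz)\,z\,dt\bigr|\le r\int_{0}^{1}k_\phi'(tr)\,dt=k_\phi(r)$. For the lower bound in~(a), since $f\in\mathcal{S}$ is univalent the arc $\Gamma:=f^{-1}\bigl([0,f(z)]\bigr)$ runs from $0$ to $z$ inside $\mathbb{D}$, and with $m(t):=k_\phi'(-t)>0$, $M(t):=\int_{0}^{t}m$, one has $|f(z)|=\int_{\Gamma}|f'(\zeta)|\,|d\zeta|\ge\int_{\Gamma}m(|\zeta|)\,|d\zeta|\ge|M(r)-M(0)|=-k_\phi(-r)$, using (b), the inequality $|\zeta'(s)|\ge\bigl|\tfrac{d}{ds}|\zeta(s)|\bigr|$, and $m\ge 0$.

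Finally, for the equality clause I would argue that equality in any of the four estimates entails equality in the distortion bound~(b) at some $z\ne 0$ (directly for~(b), and because the bounds in~(a) are obtained by integrating~(b)); the sandwich $\Phi(-|w(z)|)\le\real\Phi(w(z))\le\Phi(|w(z)|)$ together with the strict monotonicity of $\Phi$ then forces $|w(z)|=|z|$, so by the Schwarz lemma $w(z)=e^{i\alpha}z$ for a constant $\alpha$; hence $f'(z)=k_\phi'(e^{i\alpha}z)$ and, integrating with $f(0)=0$, $f(z)=e^{-i\alpha}k_\phi(e^{i\alpha}z)$ is a rotation of $k_\phi$, the converse being immediate. (Equivalently, one could run the whole proof through $\mathcal{S}^{*}(\phi)$ by noting $zf'\in\mathcal{S}^{*}(\phi)$ and $zk_\phi'=h_\phi$ from Lemma~\ref{lem-3.1}, but the obstacle in the second paragraph is unchanged.)
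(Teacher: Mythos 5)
This lemma is quoted in the paper directly from Ma and Minda \cite{Ma-Minda-1992}; the paper supplies no proof of its own, so there is nothing internal to compare against. Your reconstruction is correct and is essentially the standard Ma--Minda argument: writing $k_\phi'=e^{\Phi}$ with $\Phi(z)=\int_0^z(\phi(\zeta)-1)\zeta^{-1}\,d\zeta$, showing $\Phi$ is convex from the starlikeness of $\phi-1$, upgrading $1+zf''/f'\prec\phi$ to $\log f'\prec\Phi$ via the Suffridge/Miller--Mocanu subordination lemma (which Ma and Minda also invoke rather than prove), and then deducing the distortion bounds from the symmetric convex image of $\Phi$ and the growth bounds by radial integration and the preimage-of-a-segment argument. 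Two small points you leave implicit but which are immediate: the segment $[0,f(z)]$ lies in $f(\mathbb{D})$ because $\real\phi>0$ forces $f$ to be convex, and the equality analysis for the lower growth bound additionally needs the curve $\Gamma$ to be radial, which follows from the same chain of equalities. Neither affects correctness.
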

  
  \begin{lem}\label{lem-3.3}\cite{Ma-Minda-1992}
  	Let $f\in\mathcal{S}^*(\phi)$. Then
  	\begin{align*}
  		\frac{f(z)}{z}\prec\frac{h_{\phi}(z)}{z}.
  	\end{align*}
  	Moreover, 
  	\begin{align*}
  		-h_{\phi}(-r)\leq|f(z)|\leq h_{\phi}(r),
  	\end{align*}
  	for $|z|=r\in(0,1)$. Equality holds for some $z\neq 0$ if and only if $f$ is a rotation of $h_{\phi}$.
  \end{lem}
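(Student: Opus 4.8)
The plan is to deduce Lemma \ref{lem-3.3} from the already-stated Lemma \ref{lem-3.2} by means of the Alexander transform, which interchanges the class $\mathcal{S}^*(\phi)$ with $\mathcal{C}(\phi)$ and the extremal function $h_\phi$ with $k_\phi$. Given $f\in\mathcal{S}^*(\phi)$, I would introduce
\[
	F(z)=\int_0^z\frac{f(t)}{t}\,dt ,
\]
so that $F(0)=0$, $F'(z)=f(z)/z$, and $F'(0)=f'(0)=1$. Since $zF'(z)=f(z)\in\mathcal{S}^*$, Alexander's theorem yields $F\in\mathcal{C}\subset\mathcal{S}$, and the identity $zF''(z)/F'(z)=zf'(z)/f(z)-1$ gives $1+zF''(z)/F'(z)=zf'(z)/f(z)\prec\phi$; hence $F\in\mathcal{C}(\phi)$. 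By the same computation applied to $k_\phi$ together with the defining relations of Lemma \ref{lem-3.1}, the function $zk_\phi'(z)$ satisfies $\dfrac{z\bigl(zk_\phi'\bigr)'(z)}{zk_\phi'(z)}=1+\dfrac{zk_\phi''(z)}{k_\phi'(z)}=\phi(z)$, so by the uniqueness part of Lemma \ref{lem-3.1} we have $zk_\phi'(z)=h_\phi(z)$, i.e.\ $k_\phi'(z)=h_\phi(z)/z$.

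Next I would apply Lemma \ref{lem-3.2} to $F\in\mathcal{C}(\phi)$. Its subordination part gives $F'(z)\prec k_\phi'(z)$, which, after the substitutions $F'(z)=f(z)/z$ and $k_\phi'(z)=h_\phi(z)/z$, is exactly the asserted subordination $f(z)/z\prec h_\phi(z)/z$. For the growth estimates, the distortion part of Lemma \ref{lem-3.2} gives $k_\phi'(-r)\le|F'(z)|\le k_\phi'(r)$ for $|z|=r\in(0,1)$; multiplying through by $|z|=r$, using $f(z)=zF'(z)$, and noting that $r\,k_\phi'(r)=h_\phi(r)$ and $r\,k_\phi'(-r)=-h_\phi(-r)$ (evaluate the identity $zk_\phi'(z)=h_\phi(z)$ at $z=\pm r$; recall $h_\phi$ and $k_\phi$ have real coefficients), one obtains $-h_\phi(-r)\le|f(z)|\le h_\phi(r)$. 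Finally, the equality clause transfers: by Lemma \ref{lem-3.2}, equality for some $z\ne 0$ forces $F$ to be a rotation of $k_\phi$, say $F(z)=\overline{\mu}\,k_\phi(\mu z)$ with $|\mu|=1$; differentiating gives $f(z)=zF'(z)=\overline{\mu}\,h_\phi(\mu z)$, a rotation of $h_\phi$, and the converse direction is immediate.

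I do not expect a genuine obstacle here: once the dictionary $\mathcal{S}^*(\phi)\leftrightarrow\mathcal{C}(\phi)$, $h_\phi\leftrightarrow k_\phi$ is in place, Lemma \ref{lem-3.3} is essentially a one-line consequence of Lemma \ref{lem-3.2}. The only points that need care are checking that $F$ is correctly normalized and univalent (so that $F\in\mathcal{C}(\phi)\subset\mathcal{S}$, which is what allows Lemma \ref{lem-3.2} to be invoked) and that $zk_\phi'(z)=h_\phi(z)$ really does follow from Lemma \ref{lem-3.1}. If one wished to avoid invoking Lemma \ref{lem-3.2}, the alternative is to argue directly: write $zf'(z)/f(z)=\phi(w(z))$ for a Schwarz function $w$, integrate to obtain $\log\bigl(f(z)/z\bigr)=\int_0^z\frac{\phi(w(t))-1}{t}\,dt$, observe that $\int_0^z\frac{\phi(t)-1}{t}\,dt=\log\bigl(h_\phi(z)/z\bigr)$ is convex univalent because $\phi-1$ is starlike (this is exactly the hypothesis that $\phi$ is starlike with respect to $1$), and then invoke the standard subordination principle for this integral operator with convex majorant; but the convexity-of-the-majorant input to that principle is precisely the substance of Lemma \ref{lem-3.2} again, so the two routes amount to the same thing.
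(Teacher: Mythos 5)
Your proposal is correct as a derivation, but note that the paper does not prove this lemma at all: it is quoted verbatim from Ma--Minda \cite{Ma-Minda-1992}, just like Lemmas \ref{lem-3.1}, \ref{lem-3.2} and \ref{lem-3.4}. Your argument is the classical Alexander-transform dictionary between the two classes: the verification that $F(z)=\int_0^z f(t)/t\,dt$ lies in $\mathcal{C}(\phi)$, the identity $zk_\phi'(z)=h_\phi(z)$ via the uniqueness clause of Lemma \ref{lem-3.1}, and the transfer of the subordination, growth and equality statements are all sound (including the sign bookkeeping $rk_\phi'(-r)=-h_\phi(-r)$). The one caveat worth flagging is logical rather than mathematical: you deduce Lemma \ref{lem-3.3} from Lemma \ref{lem-3.2}, which in this paper is itself only a citation, so your argument verifies consistency between the two quoted lemmas rather than giving an independent proof. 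In Ma--Minda's own development the subordination $f(z)/z\prec h_\phi(z)/z$ is obtained directly from a Miller--Mocanu-type differential subordination lemma, and the growth bounds then follow because $\log\bigl(h_\phi(z)/z\bigr)=\int_0^z\bigl(\phi(t)-1\bigr)t^{-1}\,dt$ is convex univalent with real coefficients --- exactly the alternative route you sketch in your last paragraph. Either route is acceptable here; yours has the advantage of requiring no new analytic input beyond Lemma \ref{lem-3.2}.
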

  It should be noted that the distortion theorem does not generally apply to all Ma-Minda functions $\phi$. In \cite{Ma-Minda-1992}, Ma and Minda provided a counterexample and established the distortion theorem for functions in $\mathcal{S}^*(\phi)$ by adding two additional conditions on $\phi$, which are presented in the following lemma.
  \begin{lem}\label{lem-3.4}\cite{Ma-Minda-1992}
  Distortion theorem for the class $\mathcal{S}^*(\phi)$: assume that 
  \begin{align*}
  		\min_{|z|=r}|\phi(z)|=\phi(-r)\;\;\; \mbox{and}\;\;\; \max_{|z|=r}|\phi(z)|=\phi(r).
  \end{align*}
  If $f\in\mathcal{S}^*(\phi)$, then 
  \begin{align*}
  	h^{\prime}_{\phi}(-r)\leq|f^{\prime}(z)|\leq h^{\prime}_{\phi}(r),
  \end{align*}
  for $|z|=r\in(0,1)$. Equality holds for some $z\neq 0$ if, and only if, $f$ is a rotation of $h_{\phi}$.
  	
  \end{lem}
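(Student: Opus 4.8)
The plan is to write $f'(z)=\dfrac{f(z)}{z}\cdot\dfrac{zf'(z)}{f(z)}$ and estimate each factor on the circle $|z|=r$, using the growth theorem of Lemma~\ref{lem-3.3} for the first factor and the subordination $zf'/f\prec\phi$ together with the two standing hypotheses on $\phi$ for the second. For the second factor, since $f\in\mathcal{S}^*(\phi)$ there is a Schwarz function $w$ with $zf'(z)/f(z)=\phi(w(z))$, the quotient being analytic in $\mathbb{D}$ with value $1$ at the origin because $f\in\mathcal{S}$; and for $|z|=r\in(0,1)$ one has $|w(z)|\leq r$. The Ma--Minda normalization ${\rm Re}\,\phi>0$ makes $\phi$, hence also $1/\phi$, analytic and zero-free on $\overline{\mathbb{D}_r}$, so applying the maximum modulus principle to $\phi$ and to $1/\phi$ and invoking the assumptions $\max_{|\zeta|=r}|\phi(\zeta)|=\phi(r)$ and $\min_{|\zeta|=r}|\phi(\zeta)|=\phi(-r)$ yields
\[
\phi(-r)\leq\left|\frac{zf'(z)}{f(z)}\right|=\bigl|\phi(w(z))\bigr|\leq\phi(r).
\]

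For the first factor, Lemma~\ref{lem-3.3} gives $-h_{\phi}(-r)\leq|f(z)|\leq h_{\phi}(r)$ for $|z|=r$; since $h_{\phi}$ is univalent with real Taylor coefficients and $h_{\phi}(z)/z\to1$, we have $h_{\phi}(r)>0>h_{\phi}(-r)$ on $(0,1)$, and dividing by $r$ produces two-sided bounds for $|f(z)/z|$. Multiplying the two pairs of nonnegative bounds and using the defining relation $zh^{\prime}_{\phi}(z)/h_{\phi}(z)=\phi(z)$ of Lemma~\ref{lem-3.1}, i.e.\ $h^{\prime}_{\phi}(z)=h_{\phi}(z)\phi(z)/z$ evaluated at $z=\pm r$, the two products collapse exactly to $h^{\prime}_{\phi}(\pm r)$, giving $h^{\prime}_{\phi}(-r)\leq|f'(z)|\leq h^{\prime}_{\phi}(r)$.

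For the equality clause, suppose $|f'(z_0)|=h^{\prime}_{\phi}(r)$ for some $z_0\neq0$ with $|z_0|=r$. Then $|f'(z_0)|$ is a product of two nonnegative quantities bounded respectively by the positive numbers $h_{\phi}(r)/r$ and $\phi(r)$, whose product is exactly $h^{\prime}_{\phi}(r)$; hence both bounds are attained, and in particular $|f(z_0)|=h_{\phi}(r)$, so the equality statement of Lemma~\ref{lem-3.3} forces $f$ to be a rotation of $h_{\phi}$. The lower bound is treated identically via $|f(z_0)|=-h_{\phi}(-r)$; conversely, $f(z)=e^{-i\theta}h_{\phi}(e^{i\theta}z)$ satisfies $f'(z)=h^{\prime}_{\phi}(e^{i\theta}z)$, which attains $h^{\prime}_{\phi}(r)$ at $z=re^{-i\theta}$ and $h^{\prime}_{\phi}(-r)$ at $z=-re^{-i\theta}$.

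The main obstacle will be the two-sided estimate for $|\phi(w(z))|$: this is precisely where the hypotheses $\max_{|\zeta|=r}|\phi(\zeta)|=\phi(r)$ and $\min_{|\zeta|=r}|\phi(\zeta)|=\phi(-r)$ are indispensable, since without them the distortion bounds can fail (this is the counterexample of Ma and Minda mentioned just before the statement), whereas the remaining steps are exact computations that dovetail with Lemmas~\ref{lem-3.1} and~\ref{lem-3.3}.
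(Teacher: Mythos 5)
Your proof is correct, and since the paper only cites this lemma from Ma--Minda without reproducing an argument, the relevant comparison is with the original source: your factorization $f'(z)=\frac{f(z)}{z}\cdot\frac{zf'(z)}{f(z)}$, combined with the growth bound of Lemma~\ref{lem-3.3}, the Schwarz--Pick estimate $|w(z)|\le r$, the maximum modulus principle applied to $\phi$ and $1/\phi$ (where ${\rm Re}\,\phi>0$ guarantees $1/\phi$ is analytic), and the identity $h'_{\phi}(\pm r)=h_{\phi}(\pm r)\phi(\pm r)/(\pm r)$ from Lemma~\ref{lem-3.1}, is exactly the classical Ma--Minda argument, including the reduction of the equality case to the equality case of the growth theorem. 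No gaps.
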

  According to \cite{Ma-Minda-1992}, the functions $-k_{\phi}(-r)$ and $-h_{\phi}(-r)$ are increasing on $(0,1)$ and bounded above by $1$, which ensures the existence of the limits $\lim_{r\to 1}-k_{\phi}(-r)$ and $\lim_{r\to 1}-h_{\phi}(-r)$, denoted by $-k_{\phi}(-1)$ and $-h_{\phi}(-1)$, respectively.\vspace{2mm}
  
  The following lemma plays a key role in deriving the results for the class $\mathcal{C}(\phi)$.
  \begin{lem}\label{lem-3.5}\cite{Arora-Vinayak-CVVE-2025}
  	Let $f\in\mathcal{C}(\phi)$ having the Taylor expansion 
  	\begin{align*}
  		f(z)=z+\sum_{n=2}^{\infty}a_nz^n.
  	\end{align*}
  	Also assume equation \eqref{eq-3.1} gives the Taylor expansion of the function $k_{\phi}$. Then 
  	\begin{align*}
  		\sum_{n=1}^{\infty}|a_n|r^n\leq\sum_{n=1}^{\infty}|c_n|r^n\;\;\mbox{for}\;\; r\leq1/3.
  	\end{align*}
  \end{lem}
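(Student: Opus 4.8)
The plan is to reduce everything to the subordination already recorded in Lemma~\ref{lem-3.2}, namely that $f\in\mathcal{C}(\phi)$ forces $f'\prec k'_{\phi}$, then to invoke the coefficient comparison of Lemma~\ref{lem-2.1} at the level of derivatives, and finally to integrate in $r$ so as to recover the coefficients of $f$ itself.

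First I would fix the Taylor expansions. With $f(z)=z+\sum_{n=2}^{\infty}a_nz^n$ and, by \eqref{eq-3.1}, $k_{\phi}(z)=z+\sum_{n=2}^{\infty}c_nz^n$, one has
\begin{align*}
	f'(z)=\sum_{n=1}^{\infty}na_nz^{n-1},\qquad k'_{\phi}(z)=\sum_{n=1}^{\infty}nc_nz^{n-1},
\end{align*}
under the convention $a_1=c_1=1$; both are analytic on $\mathbb{D}$ since $f,k_{\phi}\in\mathcal{S}$. By Lemma~\ref{lem-3.2} we have $f'\prec k'_{\phi}$ in $\mathbb{D}$.

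Next I would apply Lemma~\ref{lem-2.1} to the subordination $f'\prec k'_{\phi}$. Reading $f'$ and $k'_{\phi}$ as power series indexed from $0$, the coefficient of $z^{m}$ is $(m+1)a_{m+1}$, resp.\ $(m+1)c_{m+1}$, and the constant terms are both equal to $1$. The lemma (applied with $N=1$, then adding back the matched constant terms) yields, after reindexing,
\begin{align*}
	\sum_{n=1}^{\infty}n|a_n|s^{n-1}\leq\sum_{n=1}^{\infty}n|c_n|s^{n-1}\qquad\text{for }0\leq s\leq 1/3.
\end{align*}
Since both power series converge uniformly on $[0,r]$ for every fixed $r\leq 1/3$, I would integrate this inequality from $0$ to $r$ and interchange sum and integral on each side, obtaining
\begin{align*}
	\sum_{n=1}^{\infty}|a_n|r^{n}=\int_{0}^{r}\sum_{n=1}^{\infty}n|a_n|s^{n-1}\,ds\;\leq\;\int_{0}^{r}\sum_{n=1}^{\infty}n|c_n|s^{n-1}\,ds=\sum_{n=1}^{\infty}|c_n|r^{n},
\end{align*}
which is exactly the assertion.

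The argument is short, and the only point needing care is the passage from the derivative comparison to the function comparison: Lemma~\ref{lem-2.1} controls the $n$-weighted coefficient sums of $f'$, not the unweighted sums appearing in the statement, so the integration step is genuinely essential rather than cosmetic. The minor bookkeeping around the constant coefficient of $f'$ (matching $f'(0)=1=k'_{\phi}(0)$ before restoring those terms to both sides of the coefficient inequality) is the other thing to watch; the termwise integration itself is justified by the local uniform convergence of power series inside $\mathbb{D}$.
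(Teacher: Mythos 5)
Your proof is correct. The paper itself gives no proof of this lemma — it is quoted verbatim from \cite{Arora-Vinayak-CVVE-2025} — but your argument (pass to $f'\prec k'_{\phi}$ via Lemma~\ref{lem-3.2}, apply the coefficient majorization of Lemma~\ref{lem-2.1} to the derivatives after matching the constant terms $f'(0)=k'_{\phi}(0)=1$, then integrate termwise on $[0,r]$ with $r\leq 1/3$) is exactly the standard route to this result, and every step, including the justification of the termwise integration, is sound.
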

  For further improvement of Theorems $C$ or $D$, it is natural to raise the following questions about the study of classes $\mathcal{C}(\phi)$ and $\mathcal{S}^*(\phi)$.
  \begin{prob}\label{prob-3.1}
  	Can we establish the sharp Bohr radius for $K$-quasiconformal harmonic mappings in $\mathbb{D}$ whose analytic part is subordinate to a function in the class $\mathcal{C}(\phi)$?
  \end{prob}
  \begin{prob}\label{prob-3.2}
  Can we establish the sharp Bohr radius for $K$-quasiconformal harmonic mappings in $\mathbb{D}$ whose analytic part is subordinate to a function in the class $\mathcal{S}^*(\phi)$?
  \end{prob}
  We will affirmatively address the Problem \ref{prob-3.1} and Problem \ref{prob-3.2}.
  Here, we obtain the sharp Bohr radius for harmonic mappings in which the analytic part is subordinate to a function from the class $\mathcal{C}(\phi)$.
\begin{thm}\label{thm-3.1}
Suppose that $F(z)=f(z)+\overline{g(z)}=\sum_{n=1}^{\infty}a_nz^n+\overline{\sum_{n=1}^{\infty}b_nz^n}$ is a sense-preserving $K$-quasiconformal harmonic mapping in $\mathbb{D}$ and $f\prec h$, where $h\in\mathcal{C}(\phi)$. Consider 
\begin{align*}
	P_{k,\phi}(r):=\left(\frac{2K}{K+1}\right)\hat{k}_\phi(r)+k_\phi(-1)
\end{align*}
where $\hat{k}_\phi$ has the Taylor expansion $r+\sum_{n=2}^{\infty}|c_n|r^n$. If $\lim_{r\to 1}P_{k,\phi}(r)=0$, then 
\begin{align}\label{eq-3.3}
	\sum_{n=1}^{\infty}(|a_n|+|b_n|)r^n\leq d(h(0),\partial h(\mathbb{D}))
\end{align}
holds for $|z|=r\leq\frac{1}{3}$. Otherwise, the inequality \eqref{eq-3.3} satisfies for $|z|=r\leq\min\{\frac{1}{3},r_{k,\phi}\}$ where $r_{k,\phi}$ is the root in $(0,1)$ of the equation $P_{k,\phi}(r)=0$. In this case, the result is sharp if $r_{k,\phi}\leq 1/3$ and the Taylor coefficients of $k_\phi$  are non-negative.
\end{thm}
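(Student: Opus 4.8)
The plan is to mimic the proof of Theorem~\ref{thm-2.1} almost verbatim, with the Koebe function $k_p$ replaced by the Ma--Minda extremal convex map $k_\phi$ and the distance value $p/(1+p)^2$ replaced by $-k_\phi(-1)$; throughout one uses the identity $\tfrac{2K}{K+1}=1+k$, where $k=(K-1)/(K+1)$. First I would record the two coefficient estimates. Writing $h(z)=z+\sum_{n\ge2}h_nz^n$, the subordination $f\prec h$ together with Lemma~\ref{lem-2.1} gives $\sum_{n\ge1}|a_n|r^n\le\sum_{n\ge1}|h_n|r^n$ for $r\le1/3$, and since $h\in\mathcal{C}(\phi)$, Lemma~\ref{lem-3.5} gives $\sum_{n\ge1}|h_n|r^n\le\sum_{n\ge1}|c_n|r^n=\hat k_\phi(r)$ for $r\le1/3$; hence $\sum_{n\ge1}|a_n|r^n\le\hat k_\phi(r)$ on $|z|=r\le1/3$. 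Next, because $F$ is sense-preserving and $K$-quasiconformal, Schwarz's lemma shows the dilatation $\omega=g'/f'$ is analytic with $|\omega|\le k$ in $\mathbb{D}$, so Lemma~\ref{lem-2.2} gives $\sum_{n\ge1}|b_n|r^n\le k\sum_{n\ge1}|a_n|r^n\le k\,\hat k_\phi(r)$ for $r\le1/3$. Adding, $\sum_{n\ge1}(|a_n|+|b_n|)r^n\le(1+k)\hat k_\phi(r)=\bigl(\tfrac{2K}{K+1}\bigr)\hat k_\phi(r)$ for $r\le1/3$.

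The second ingredient is the Koebe-type lower bound $d(h(0),\partial h(\mathbb{D}))\ge-k_\phi(-1)$, valid for every $h\in\mathcal{C}(\phi)$. Here $h(0)=0$, and the growth theorem (Lemma~\ref{lem-3.2}(a)) gives $|h(z)|\ge-k_\phi(-|z|)$; since $r\mapsto-k_\phi(-r)$ is increasing and continuous on $(0,1)$ with limit $-k_\phi(-1)$, every boundary point $\zeta$ of $h(\mathbb{D})$ --- being a limit of values $h(z_j)$ with $|z_j|\to1$ --- satisfies $|\zeta|\ge-k_\phi(-1)$, whence $d(0,\partial h(\mathbb{D}))\ge-k_\phi(-1)$. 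Combined with the estimate above, the inequality \eqref{eq-3.3} follows once one knows $(1+k)\hat k_\phi(r)\le-k_\phi(-1)$, i.e. $P_{k,\phi}(r)\le0$, in addition to $r\le1/3$.

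It then remains to analyse $P_{k,\phi}(r)=(1+k)\hat k_\phi(r)+k_\phi(-1)$ exactly as $G_1$ was treated in Theorem~\ref{thm-2.1}. Since $\hat k_\phi(r)=r+\sum_{n\ge2}|c_n|r^n$ we have $P_{k,\phi}'(r)=(1+k)\hat k_\phi'(r)>0$ on $(0,1)$, so $P_{k,\phi}$ is strictly increasing there, with $P_{k,\phi}(0)=k_\phi(-1)<0$ because $-k_\phi(-1)>0$. If $\lim_{r\to1}P_{k,\phi}(r)=0$, then $P_{k,\phi}<0$ throughout $(0,1)$, so $(1+k)\hat k_\phi(r)<-k_\phi(-1)\le d(h(0),\partial h(\mathbb{D}))$ for all $r<1$, and \eqref{eq-3.3} holds on the whole range $r\le1/3$ dictated by the coefficient lemmas. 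Otherwise $\lim_{r\to1}P_{k,\phi}(r)>0$, the intermediate value theorem produces a unique root $r_{k,\phi}\in(0,1)$ of $P_{k,\phi}$, and for $r\le r_{k,\phi}$ one gets $(1+k)\hat k_\phi(r)\le-k_\phi(-1)\le d(h(0),\partial h(\mathbb{D}))$; intersecting with $r\le1/3$ gives \eqref{eq-3.3} for $r\le\min\{1/3,r_{k,\phi}\}$. For sharpness I would take $f=h=k_\phi$ and $g=k\lambda k_\phi$ with $|\lambda|=1$ and $k=(K-1)/(K+1)$, so that $\omega_F\equiv k\lambda$ and $F$ is $K$-quasiconformal; when the coefficients $c_n$ of $k_\phi$ are nonnegative, $\hat k_\phi=k_\phi$ on $[0,1)$ and (the boundary point $k_\phi(-1)$ realising the distance) $d(h(0),\partial h(\mathbb{D}))=-k_\phi(-1)$, so that $\sum_{n\ge1}(|a_n|+|b_n|)r^n=(1+k)\hat k_\phi(r)$ exceeds $-k_\phi(-1)=d(h(0),\partial h(\mathbb{D}))$ precisely for $r>r_{k,\phi}$; if $r_{k,\phi}\le1/3$ such $r$ may be chosen $\le1/3$, showing that $\min\{1/3,r_{k,\phi}\}=r_{k,\phi}$ is best possible.

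The main obstacle --- really the only nonroutine step --- is the Koebe-type lower bound $d(h(0),\partial h(\mathbb{D}))\ge-k_\phi(-1)$ and the matching equality for the extremal $h=k_\phi$, which rest on the Ma--Minda growth theorem and the monotonicity of $r\mapsto-k_\phi(-r)$. Everything else is a direct transcription of the proof of Theorem~\ref{thm-2.1}, the one point to watch being that the cutoff $r\le1/3$ coming from Lemmas~\ref{lem-2.1}, \ref{lem-2.2} and \ref{lem-3.5} cannot be removed and is precisely what produces the $\min\{1/3,r_{k,\phi}\}$ in the statement.
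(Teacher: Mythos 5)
Your proposal is correct and follows essentially the same route as the paper: Lemmas \ref{lem-2.1} and \ref{lem-3.5} for the analytic part, Lemma \ref{lem-2.2} for the co-analytic part, the growth theorem (Lemma \ref{lem-3.2}(a)) letting $r\to1$ to get $d(h(0),\partial h(\mathbb{D}))\ge -k_\phi(-1)$, the same two-case monotonicity analysis of $P_{k,\phi}$, and the same extremal pair $f=h=k_\phi$, $g=k\lambda k_\phi$ for sharpness. The only difference is that you spell out the boundary-distance estimate in slightly more detail than the paper does.
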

\begin{rem}
	If we choose $\phi(z) = \frac{1 + Az}{1+Bz}$ ($-1\leq B<A\leq 1$), the Ma-Minda class of convex functions $\mathcal{C}(\phi)$ reduces to the familiar class  $\mathcal{C}(A,B)$ consisting of the Janowski convex functions. For this particular $\phi$, by using Lemma \ref{lem-3.1}, it can be easily seen that the function $k_{\phi}(z)$ takes the following form
	\begin{align*}
		k_{\phi}(z)=\begin{cases}
			\frac{1}{A}\left[(1+Bz)^{\frac{A}{B}}-1\right],\;\; B\neq 0\;\;\mbox{and}\;\; A\neq0\\\frac{1}{B}\log(1+Bz), \;\;\; \;\;\;\; \;\;\;B\neq 0\;\;\mbox{and}\;\; A=0\\\frac{1}{A}(e^{Az}-1),\;\;\;\;\;\;\;\;\;\;\;\;\;\;\;B=0.
		\end{cases}
	\end{align*}
	and the absolute sum of the terms in the Taylor expansion of $k_{\phi}$ is as follows
	\begin{align*}
		\hat{k}_{\phi}(r)=\begin{cases}
			r+\sum_{n=2}^{\infty}\prod_{m=2}^{n}|A-(m-1)B|r^n,\;\; B\neq 0\;\;\mbox{and}\;\; A\neq 0\\r+\sum_{n=2}^{\infty}\frac{|B|^{n-1}}{n}r^n,\;\;\;\;\;\;\;\;\;\;\;\;\;\;\;\;\;\;\;\;\;\;\;\;\;\;\; B\neq 0\;\;\mbox{and}\;\; A=0\\\frac{1}{A}(e^{Ar}-1),\;\;\;\;\;\;\;\;\;\;\;\;\;\;\;\;\;\;\;\;\;\;\;\;\;\;\;\;\;\;\;\;\;\;\;\;\;\; B=0.
		\end{cases}
	\end{align*}
\end{rem}

If we let $\phi(z) = \frac{1 + Az}{1 + Bz}$ ($-1\leq B<A\leq 1$), then Theorem \eqref{thm-3.1} leads directly to the following Corollary.
\begin{cor}\label{cor-3.1}
	Suppose that $F(z)=f(z)+\overline{g(z)}=\sum_{n=1}^{\infty}a_nz^n+\overline{\sum_{n=1}^{\infty}b_nz^n}$ is a sense-preserving $K$-quasiconformal harmonic mapping in $\mathbb{D}$ and $f\prec h$, where $h\in\mathcal{C}(A,B)$. Consider 
	\begin{align*}
&P_{A,B,k}(r)\\&:=\begin{cases}
			\left(\frac{2K}{K+1}\right)\left[r+\sum_{n=2}^{\infty}\prod_{m=0}^{n-2}|A-(m-1)B|r^n\right]+\frac{1}{A}\left[(1-B)^{\frac{A}{B}}-1\right],\;\; B\neq 0\;\;\mbox{and}\;\;A\neq0\\\left(\frac{2K}{K+1}\right)\left(r+\sum_{n=2}^{\infty}\frac{|B|^{n-1}}{n}r^n\right)+\frac{1}{B}\log(1-B),\;\;\;\;\;\;\;\;\;\;\;\;\;\;\;\;\;\;\;\;\;\;\;\;\;\;\;\;\;\;\;\;\;\; B\neq 0\;\;\mbox{and}\;\;A=0\\\frac{1}{A}\left[\left(\frac{2K}{K+1}\right)(e^{Ar}-1)+(e^{-A}-1)\right],\;\;\;\;\;\;\;\;\;\;\;\;\;\;\;\;\;\;\;\;\;\;\;\;\;\;\;\;\;\;\;\;\;\;\;\;\;\;\;\;\;\;\;\;\;\;\;\;\;\; B=0.
		\end{cases}
	\end{align*}
	If $\lim_{r\to 1}P_{A, B, k}(r)=0$, then 
	\begin{align}\label{Eq-3.4}
		\sum_{n=1}^{\infty}(|a_n|+|b_n|)r^n\leq d(h(0),\partial h(\mathbb{D}))
	\end{align}
	holds for $|z|=r\leq1/3$. Otherwise, the inequality \eqref{Eq-3.4} satisfies for $|z|=r\leq\min\{1/3,R_{A,B,k}\}$ where $R_{A,B,k}$ is the root in $(0,1)$ of the equation $P_{A,B,k}(r)=0$. In this case, the result is sharp if $R_{A,B,k}\leq 1/3$ and the Taylor coefficients of $k_\phi$  are non-negative.
\end{cor}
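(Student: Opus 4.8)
The plan is to obtain Corollary \ref{cor-3.1} as the direct specialization of Theorem \ref{thm-3.1} to the Janowski function $\phi(z)=(1+Az)/(1+Bz)$ with $-1\le B<A\le1$, the only work being to make $k_\phi$, $\hat{k}_\phi$ and $k_\phi(-1)$ explicit. First I would verify that this $\phi$ satisfies the hypotheses of Definition \ref{def-3.1}: it is analytic and univalent on $\mathbb{D}$ (a M\"obius transformation), $\phi(0)=1$, $\phi'(0)=A-B>0$, $\operatorname{Re}\phi>0$ on $\mathbb{D}$ (the image is the disk---or a half-plane when $|B|=1$---obtained by mapping $\partial\mathbb{D}$, and this region lies in the right half-plane), $\phi$ is symmetric about the real axis because $A,B\in\mathbb{R}$, and it is starlike with respect to $1$. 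Hence $\phi$ is a Ma-Minda function, $\mathcal{C}(\phi)=\mathcal{C}(A,B)$, and Theorem \ref{thm-3.1} applies verbatim.

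Next I would compute $k_\phi$ from Lemma \ref{lem-3.1}. Writing the defining relation $1+zk_\phi''(z)/k_\phi'(z)=\phi(z)$ as $k_\phi''/k_\phi'=(A-B)/(1+Bz)$ when $B\ne0$ and integrating from $0$ with $k_\phi'(0)=1$ gives $k_\phi'(z)=(1+Bz)^{A/B-1}$; a second integration then yields
\begin{align*}
	k_\phi(z)=\begin{cases}\frac{1}{A}\bigl[(1+Bz)^{A/B}-1\bigr],&B\ne0,\ A\ne0,\\ \frac{1}{B}\log(1+Bz),&B\ne0,\ A=0,\\ \frac{1}{A}\bigl(e^{Az}-1\bigr),&B=0,\end{cases}
\end{align*}
the last case following from $k_\phi'(z)=e^{Az}$ (note $0<A\le1$ here, since $B<A$). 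Expanding these in powers of $z$---using the binomial series $(1+Bz)^{A/B}=\sum_{n\ge0}\binom{A/B}{n}(Bz)^n$ in the first case---and taking absolute values of the Taylor coefficients gives the expressions for $\hat{k}_\phi(r)$ recorded in the Remark preceding the Corollary, while substituting $z=-1$ into the closed forms gives $k_\phi(-1)$ equal to $\frac{1}{A}[(1-B)^{A/B}-1]$, $\frac{1}{B}\log(1-B)$, and $\frac{1}{A}(e^{-A}-1)$ in the three cases, respectively.

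Finally, inserting these into $P_{k,\phi}(r)=\bigl(\frac{2K}{K+1}\bigr)\hat{k}_\phi(r)+k_\phi(-1)$ reproduces $P_{A,B,k}(r)$ exactly, and Theorem \ref{thm-3.1} yields the conclusion directly: if $\lim_{r\to1}P_{A,B,k}(r)=0$ then \eqref{Eq-3.4} holds for $|z|=r\le 1/3$, and otherwise for $|z|=r\le\min\{1/3,R_{A,B,k}\}$, where $R_{A,B,k}$ is the unique root of $P_{A,B,k}$ in $(0,1)$, with sharpness when $R_{A,B,k}\le 1/3$ and the Taylor coefficients of $k_\phi$ are non-negative. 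The only steps calling for any care are checking that $\phi$ genuinely meets the Ma-Minda requirements (in particular $\operatorname{Re}\phi>0$) and the bookkeeping of the absolute values of the coefficients of $(1+Bz)^{A/B}$, which amounts to tracking the signs of the factors $A-mB$ for $m=1,\dots,n-1$; beyond invoking Theorem \ref{thm-3.1} there is no substantial obstacle.
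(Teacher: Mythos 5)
Your proposal is correct and is exactly the route the paper takes: the paper gives no separate proof of this corollary, obtaining it by the same direct specialization of Theorem \ref{thm-3.1} to $\phi(z)=(1+Az)/(1+Bz)$, with $k_\phi$, $\hat{k}_\phi$ and $k_\phi(-1)$ computed from Lemma \ref{lem-3.1} just as in your argument. One small caveat: a careful binomial expansion in the first case gives $|c_n|=\prod_{j=1}^{n-1}|A-jB|\,/\,n!$, so the product formulas as printed in the paper's preceding Remark and in the Corollary itself (which use mutually inconsistent index ranges and omit the $1/n!$) contain typographical errors; your method produces the correct coefficients, so this is a defect of the printed statement rather than a gap in your proof.
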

\begin{rem}
	By setting $\phi(z) = \frac{1 + (1 - 2\alpha)z}{1 - z}$ for $\alpha \in [0, 1)$, the Ma–Minda class of convex functions $\mathcal{C}(\phi)$ coincides with the class $\mathcal{C}(\alpha)$ of convex functions of order $\alpha$. For this $\phi$, the function $k_{\phi}$ is given by 
	\begin{align*}
		k_{\phi}(z)=\begin{cases}
			\frac{1-(1-z)^{2\alpha-1}}{2\alpha-1},\;\;\;\;\alpha\neq\frac{1}{2}\\-\log(1-z),\;\;\alpha=\frac{1}{2}.
		\end{cases}
	\end{align*}
	and hence
	\begin{align*}
		\hat{k}_{\phi}(r)=\begin{cases}
			\frac{1-(1-r)^{2\alpha-1}}{2\alpha-1},\;\;\;\;\alpha\neq\frac{1}{2}\\-\log(1-r),\;\;\alpha=\frac{1}{2}.
		\end{cases}
	\end{align*}
\end{rem}
Thus, setting $\phi(z) = \frac{1 + (1-2\alpha)z}{1-z}$ in Theorem \ref{thm-3.1} gives rise to the following corollary.
  
\begin{cor}\label{cor-3.2}
	Suppose that $F(z)=f(z)+\overline{g(z)}=\sum_{n=1}^{\infty}a_nz^n+\overline{\sum_{n=1}^{\infty}b_nz^n}$ is a sense-preserving $K$-quasiconformal harmonic mapping in $\mathbb{D}$ and $f\prec h$, where $h\in\mathcal{C}(\alpha)$. Then 
	\begin{align*}
		\sum_{n=1}^{\infty}(|a_n|+|b_n|)r^n\leq d(h(0),\partial h(\mathbb{D}))
	\end{align*}
	holds for $|z|=r\leq\min\{1/3, R_{k,\alpha}\}$, where $R_{k,\alpha}$ is the root in $(0,1)$ of the equations
	\begin{align*}
		\left(\frac{2K}{K+1}\right)\left(\frac{1-(1-r)^{2\alpha-1}}{2\alpha-1}\right)+\frac{1-2^{2\alpha-1}}{2\alpha-1}=&0,\;\;\mbox{if}\;\;\alpha\neq\frac{1}{2}\\\left(\frac{2K}{K+1}\right)\log(1-r)+\log2=&0,\;\;\mbox{if}\;\;\alpha=\frac{1}{2}
	\end{align*}
	The number $R_{k,\alpha}$ is sharp if $R_{k,\alpha}\leq 1/3$. 
	
\end{cor}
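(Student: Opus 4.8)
The plan is to derive Corollary \ref{cor-3.2} as the specialization of Theorem \ref{thm-3.1} to the Ma--Minda function $\phi(z)=(1+(1-2\alpha)z)/(1-z)$, for which $\mathcal{C}(\phi)$ is exactly the class $\mathcal{C}(\alpha)$ of convex functions of order $\alpha$. First I would insert the explicit extremal data recorded in the preceding remark: for $\alpha\neq\tfrac12$,
\[
k_\phi(z)=\frac{1-(1-z)^{2\alpha-1}}{2\alpha-1},\qquad \hat{k}_\phi(r)=\frac{1-(1-r)^{2\alpha-1}}{2\alpha-1},
\]
and for $\alpha=\tfrac12$, $k_\phi(z)=-\log(1-z)$ with $\hat{k}_\phi(r)=-\log(1-r)$. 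Evaluating at $z=-1$ (that is, $\lim_{r\to1}k_\phi(-r)$) gives $k_\phi(-1)=(1-2^{2\alpha-1})/(2\alpha-1)$ for $\alpha\neq\tfrac12$ and $k_\phi(-1)=-\log2$ for $\alpha=\tfrac12$. Plugging these into $P_{k,\phi}(r)=\bigl(\tfrac{2K}{K+1}\bigr)\hat{k}_\phi(r)+k_\phi(-1)$ reproduces the first defining equation of $R_{k,\alpha}$ immediately, and the second after multiplying through by $-1$.

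The second step is to confirm that Corollary \ref{cor-3.2} always falls in the ``otherwise'' branch of Theorem \ref{thm-3.1}, so that a genuine root $R_{k,\alpha}\in(0,1)$ of $P_{k,\phi}(r)=0$ exists. Because $\hat{k}_\phi$ is a power series with non-negative coefficients and $\tfrac{2K}{K+1}>0$, the function $P_{k,\phi}$ is strictly increasing on $(0,1)$ with $P_{k,\phi}(0)=k_\phi(-1)<0$ (indeed $(1-2^{2\alpha-1})/(2\alpha-1)<0$ for every $\alpha\in[0,1)\setminus\{\tfrac12\}$, and $-\log2<0$). For $\alpha\le\tfrac12$ one has $\lim_{r\to1}\hat{k}_\phi(r)=+\infty$, hence $\lim_{r\to1}P_{k,\phi}(r)=+\infty$; for $\alpha>\tfrac12$ the limit is finite and equals $\tfrac{1}{2\alpha-1}\bigl[\tfrac{2K}{K+1}+1-2^{2\alpha-1}\bigr]$, which is strictly positive since $\tfrac{2K}{K+1}+1\ge2>2^{2\alpha-1}$ whenever $2\alpha-1\in(0,1)$. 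Thus $\lim_{r\to1}P_{k,\phi}(r)>0$ in every case, and the intermediate value theorem supplies a unique root $R_{k,\alpha}$; Theorem \ref{thm-3.1} then gives the stated inequality for $|z|=r\le\min\{1/3,R_{k,\alpha}\}$.

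The last step addresses sharpness, for which Theorem \ref{thm-3.1} requires that $k_\phi$ have non-negative Taylor coefficients. Writing $s=2\alpha-1$ and expanding $(1-z)^{s}$ by the binomial series, the $n$-th coefficient of $k_\phi$ is $c_n=\frac{(-1)^{n+1}}{s}\binom{s}{n}$. A short sign count finishes this: for $s\in(0,1)$ the $n-1$ factors $s-1,\dots,s-(n-1)$ of $\binom{s}{n}$ are all negative, giving $\operatorname{sign}\binom{s}{n}=(-1)^{n-1}$, which cancels the $(-1)^{n+1}$ and leaves $c_n>0$; for $s\in[-1,0)$ all $n$ factors are negative, and the sign of $1/s$ compensates to give $c_n>0$ again. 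Hence $k_\phi$ has non-negative coefficients for every $\alpha\in[0,1)$, so the sharpness clause of Theorem \ref{thm-3.1} applies verbatim and $R_{k,\alpha}$ is best possible whenever $R_{k,\alpha}\le1/3$. The only non-mechanical parts are this coefficient-sign computation and the case analysis showing $\lim_{r\to1}P_{k,\phi}(r)>0$; I expect the latter, particularly pinning down the finite limit for $\alpha>\tfrac12$, to be the main point requiring care.
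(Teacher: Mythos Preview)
Your approach is correct and is precisely the paper's: Corollary \ref{cor-3.2} is obtained by specializing Theorem \ref{thm-3.1} to $\phi(z)=(1+(1-2\alpha)z)/(1-z)$ and inserting the explicit $k_\phi$, $\hat{k}_\phi$, and $k_\phi(-1)$ recorded in the preceding remark. You in fact supply more than the paper does: it simply states the corollary as a direct consequence, whereas you verify that $\lim_{r\to1}P_{k,\phi}(r)>0$ (so the ``otherwise'' branch applies and $R_{k,\alpha}$ exists) and you check the non-negativity of the Taylor coefficients of $k_\phi$ needed for the sharpness clause --- points the paper leaves implicit in writing $\hat{k}_\phi(r)=k_\phi(r)$.
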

\begin{rem}
	If $0\leq\alpha<1$, $K=1$, and $f=h$, then the Corollary \ref{cor-3.2} coincides with the result in \cite[Theorem 2.2]{Ali-Jain-Ravichandran-RM-2019}.
\end{rem}
\begin{rem}
	Taking $\phi(z) = \frac{1 + z}{1 - z}$, the class $\mathcal{C}(\phi)$ becomes identical to the classical convex class $\mathcal{C}$. With this $\phi$, it is straightforward to obtain $k_{\phi}$ as follows 
	\begin{align*}
		k_{\phi}(z)=\frac{z}{(1-z)^2}\;\;\; \mbox{and}\;\;\; \hat{k}_{\phi}(r)=\frac{r}{(1-r)^2}.
	\end{align*}
\end{rem}
For the choice $\phi(z) = \frac{1 + z}{1 - z}$, Theorem \ref{lem-3.1} immediately gives the result stated in the following corollary.
\begin{cor}\label{cor-3.3}
	
		Suppose that $F(z)=f(z)+\overline{g(z)}=\sum_{n=1}^{\infty}a_nz^n+\overline{\sum_{n=1}^{\infty}b_nz^n}$ is a sense-preserving $K$-quasiconformal harmonic mapping in $\mathbb{D}$ and $f\prec h$, where $h\in\mathcal{C}$. Then
	\begin{align*}
		\sum_{n=1}^{\infty}(|a_n|+|b_n|)r^n\leq d(\phi(0),\partial\phi(\mathbb{D}))\;\; \mbox{for}\;\; r\leq \frac{K+1}{5K+1},
	\end{align*}
	The number $(K+1)/(5K+1)$ is sharp.
\end{cor}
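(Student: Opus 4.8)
The plan is to obtain Corollary \ref{cor-3.3} as the special case of Theorem \ref{thm-3.1} corresponding to the M\"obius function $\phi(z)=(1+z)/(1-z)$. First I would check that this $\phi$ is admissible: one has $\phi(0)=1$, $\phi'(0)=2>0$, and $\phi$ maps $\mathbb{D}$ conformally onto the right half-plane $\{\real w>0\}$, which is symmetric about the real axis and, being convex, starlike with respect to $1$; hence $\phi$ is a Ma--Minda function in the sense of Definition \ref{def-3.1}. Since the right half-plane is precisely the image of $(1+z)/(1-z)$, the subordination $1+zf''(z)/f'(z)\prec\phi(z)$ is equivalent to $\real\bigl(1+zf''(z)/f'(z)\bigr)>0$, so $\mathcal{C}(\phi)$ is exactly the classical convex class $\mathcal{C}$, and Theorem \ref{thm-3.1} applies verbatim with $h\in\mathcal{C}(\phi)=\mathcal{C}$.

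Next I would evaluate the quantities in $P_{k,\phi}$ for this $\phi$. Solving the defining relation of Lemma \ref{lem-3.1}, namely $1+zk_\phi''(z)/k_\phi'(z)=(1+z)/(1-z)$, yields $k_\phi'(z)=(1-z)^{-2}$ and hence $k_\phi(z)=z/(1-z)=\sum_{n=1}^{\infty}z^n$. Therefore $\hat k_\phi(r)=\sum_{n=1}^{\infty}r^n=r/(1-r)$, and $k_\phi(-1)=\lim_{r\to1^-}\bigl(-r/(1+r)\bigr)=-1/2$, so that
\begin{align*}
	P_{k,\phi}(r)=\left(\frac{2K}{K+1}\right)\frac{r}{1-r}-\frac12 .
\end{align*}
Because $r/(1-r)\to+\infty$ as $r\to1^-$, we have $\lim_{r\to1}P_{k,\phi}(r)=+\infty\neq0$, so Theorem \ref{thm-3.1} puts us in its second alternative: the inequality $\sum_{n=1}^{\infty}(|a_n|+|b_n|)r^n\leq d(h(0),\partial h(\mathbb{D}))$ holds for $|z|=r\leq\min\{1/3,r_{k,\phi}\}$, where $r_{k,\phi}\in(0,1)$ is the root of $P_{k,\phi}(r)=0$. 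Clearing denominators turns $P_{k,\phi}(r)=0$ into the \emph{linear} equation $4Kr=(K+1)(1-r)$, whence $r_{k,\phi}=(K+1)/(5K+1)$. Finally, $(K+1)/(5K+1)\leq1/3$ holds for every $K\geq1$ (it is equivalent to $3(K+1)\leq5K+1$, i.e.\ $K\geq1$), so the minimum in Theorem \ref{thm-3.1} collapses to $(K+1)/(5K+1)$, which is the asserted radius.

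For the sharpness I would quote the sharpness clause of Theorem \ref{thm-3.1}: it requires $r_{k,\phi}\leq1/3$ (just verified) and the Taylor coefficients of $k_\phi$ to be non-negative, which holds here since $k_\phi(z)=z/(1-z)$ has every coefficient equal to $1$. Concretely, the extremal mapping is $F=k_\phi+\overline{k\lambda k_\phi}$ with $|\lambda|=1$ and $k=(K-1)/(K+1)$: here $h(\mathbb{D})=k_\phi(\mathbb{D})$ is the half-plane $\{\real w>-1/2\}$, so $d(h(0),\partial h(\mathbb{D}))=1/2=-k_\phi(-1)$, while $\sum_{n=1}^{\infty}(|a_n|+|b_n|)r^n=\tfrac{2K}{K+1}\cdot\tfrac{r}{1-r}$ exceeds $1/2$ exactly when $r>(K+1)/(5K+1)$. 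There is no substantial obstacle in this argument -- Corollary \ref{cor-3.3} is a direct specialization -- but the one point that must be gotten right is the identification of the extremal function of Lemma \ref{lem-3.1} for this $\phi$, namely the \emph{convex} extremal $k_\phi(z)=z/(1-z)$ (so that $\hat k_\phi(r)=r/(1-r)$, not $r/(1-r)^2$). Once that is in hand the linear computation of $r_{k,\phi}$ and the collapse of the $\min\{1/3,\cdot\}$ are immediate, and one thereby recovers Theorem C restricted to normalized convex $\phi$, as well as the classical Bohr radius $1/3$ for the convex class when $K=1$.
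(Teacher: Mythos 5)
Your proof is correct and follows the paper's intended route: specialize Theorem \ref{thm-3.1} to $\phi(z)=(1+z)/(1-z)$, compute the convex extremal $k_\phi$, and solve the linear equation $P_{k,\phi}(r)=0$ to get $(K+1)/(5K+1)\leq 1/3$. Note that you have in fact corrected an inconsistency in the paper's own remark preceding this corollary, which displays $k_\phi(z)=z/(1-z)^2$ and $\hat k_\phi(r)=r/(1-r)^2$ (the starlike extremal); only your $k_\phi(z)=z/(1-z)$ -- which agrees with the paper's $\mathcal{C}(\alpha)$ formula at $\alpha=0$ -- yields the stated radius $(K+1)/(5K+1)$, whereas the remark's $\hat k_\phi$ would instead reproduce the starlike radius $(5K+1-2\sqrt{6K^2+2K})/(K+1)$ of Corollary \ref{Cor-3.3}.
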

\begin{rem}
	Theorem C provides the Bohr radius for sense-preserving $K$-quasiconformal harmonic mappings in $\mathbb{D}$ with analytic parts subordinate to a convex function, while Corollary \ref{cor-3.3} determines the same Bohr radius for such mappings when the analytic part is subordinate to a normalized convex function.
\end{rem}
\begin{proof}[\bf Proof of Theorem \ref{thm-3.1}]
	Let $h\in\mathcal{C}(\phi)$ with the Taylor expansion $h(z)=z+\sum_{n=2}^{\infty}e_nz^n$. Since $f\prec h$, by applying Lemma \ref{lem-2.1} and Lemma \ref{lem-3.5}, we have
	\begin{align}\label{eq-3.4}
		\sum_{n=1}^{\infty}|a_n|r^n\leq\sum_{n=1}^{\infty}|e_n|r^n\leq\sum_{n=1}^{\infty}|c_n|r^n\;\;\mbox{for}\;\; |z|=r\leq 1/3.
	\end{align}
	By applying the similar arguments as in the proof of Theorem \ref{thm-2.1} and Lemma \ref{lem-2.2}, we obtain
	
	\begin{align}\label{eq-3.5}
		\sum_{n=1}^{\infty}|b_n|r^n\leq k\sum_{n=1}^{\infty}|a_n|r^n\leq k\sum_{n=1}^{\infty}|c_n|r^n \;\;\mbox{for}\;\; |z|=r\leq 1/3.
	\end{align}
	By using \eqref{eq-3.4} and \eqref{eq-3.5}, we have 
	\begin{align}\label{eq-3.6}
		C_1(r):=\sum_{n=1}^{\infty}(|a_n|+|b_n|)r^n\leq& (k+1)\sum_{n=1}^{\infty}|c_n|r^n\\\nonumber=&(k+1)\hat{k}_p(r)\\=&P_{k,\phi}(r)-k_{\phi}(-1)\nonumber,
	\end{align}
	where $P_{k,\phi}(r)=(k+1)\hat{k}_p(r)+k_{\phi}(-1)$. Clearly, $P_{k,\phi}$ is a continuous and strictly increasing function on the interval $(0,1)$. Also, we can deduce the following inequality by letting $r\to 1$ in Lemma \ref{lem-3.2}(a) that
	\begin{align*}
		-k_{\phi}(-1)\leq k_{\phi}(1)\leq\hat{k}_{\phi}(1)\leq(k+1)\hat{k}_{\phi}(1).
	\end{align*}
	It follows that $(k+1)\hat{k}_{\phi}(1)+k_{\phi}(-1)\geq 0$ and hence $\lim_{r\to 1}P_{k,\phi}(r)=(k+1)\hat{k}_{\phi}(1)+k_{\phi}(-1)\geq 0$.\vspace{2mm}
	
	\noindent{\bf Case I:} $\lim_{r\to 1}P_{k,\phi}(r)=0$.\vspace{1.2mm}
	
	Since $P_{k,\phi}(r)$ is strictly increasing function on $(0,1)$, we obtain $P_{k,\phi}(r)<0$ for all $r\in(0,1)$. Also, by letting $r\to 1$ in the inequality of Lemma \ref{lem-3.2}(a), we obtain $-k_{\phi}(-1)\leq d(h(0),\partial h(\mathbb{D}))$. Then from the inequality \eqref{eq-3.6}, we have $C_1(r)\leq-k_{\phi}(-1)\leq d(h(0),\partial h(\mathbb{D}))$ for $|z|=r\leq 1/3$. \vspace{1.2mm} 
	
	\noindent{\bf Case II:} $\lim_{r\to 1}P_{k,\phi}(r)>0$.\vspace{1.2mm}
	
	It is noted that $P_{k,\phi}(0)=k_{\phi}(-1)<0$ ensures the existence of a unique root of $P_{k,\phi}(r)=0$ in $(0,1)$, denoted by $r_{k,\phi}$. Then we have $P_{k,\phi}(r)\leq0$ for $|z|=r\leq r_{k,\phi}$. This gives 
	\begin{align*}
		C_1(r)\leq-k_{\phi}(-1)\leq d(h(0),\partial h(\mathbb{D}))\;\; \mbox{for}\;\; |z|=r\leq \min\{1/3,r_{k,\phi}\}.
	\end{align*}
	To establish the sharpness of the radius obtained in {\bf Case II} when $r_{k,\phi}\leq 1/3$, we consider the function $F(z)=f(z)+\overline{g(z)}$ in $\mathbb{D}$, where $f=h=k_{\phi}$ and $g=k\lambda k_{\phi}$ with non-negative Taylor coefficient of $k_{\phi}$, and $|\lambda|=1$. Then the facts $\hat{k}_{\phi}=k_{\phi}$ and $d(k_{\phi}(0),\partial k_{\phi}(\mathbb{D})=-k_{\phi}(-1)$ give that
	\begin{align*}
		\sum_{n=1}^{\infty}(|c_n|+|k\lambda c_n|)r^n=(k+1)k_{\phi}(r)>-k_{\phi}(-1)=d(k_{\phi}(0),\partial k_{\phi}(\mathbb{D})
	\end{align*}
	holds for $r>r_{k,\phi}$. This shows that the radius $r_{k,\phi}$ cannot be improved. 
\end{proof}
Our focus here is on deriving the sharp Bohr radius for harmonic mappings with an analytic component subordinate to a function in $\mathcal{S}^*(\phi)$.

\begin{thm}\label{thm-3.2}
	Suppose that $F(z)=f(z)+\overline{g(z)}=\sum_{n=1}^{\infty}a_nz^n+\overline{\sum_{n=1}^{\infty}b_nz^n}$ is a sense-preserving $K$-quasiconformal harmonic mapping in $\mathbb{D}$ and $f\prec h$, where $h\in \mathcal{S}^*(\phi)$. Consider 
	\begin{align*}
		Q_{k,\phi}(r):=\left(\frac{2K}{K+1}\right)\hat{h}_\phi(r)+h_\phi(-1)
	\end{align*}
	where $\hat{h}_\phi$ has the Taylor expansion $r+\sum_{n=2}^{\infty}|d_n|r^n$. If $\lim_{r\to 1}Q_{k,\phi}(r)=0$, then 
	\begin{align}\label{eq-3.7}
		\sum_{n=1}^{\infty}(|a_n|+|b_n|)r^n\leq d(h(0),\partial h(\mathbb{D}))
	\end{align}
	holds for $|z|=r\leq1/3$. Otherwise, the inequality \eqref{eq-3.7} satisfies for $|z|=r\leq\min\{1/3,r^*_{k,\phi}\}$ where $r^*_{k,\phi}$ is the root in $(0,1)$ of the equation $Q_{k,\phi}(r)=0$. In this case, the result is sharp if $r^*_{k,\phi}\leq 1/3$ and the Taylor coefficients of $h_\phi$  are non-negative.
\end{thm}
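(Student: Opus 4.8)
The plan is to run the argument of the proof of Theorem~\ref{thm-3.1} almost verbatim, replacing the convex-class ingredients ($k_\phi$, Lemmas~\ref{lem-3.2} and~\ref{lem-3.5}) by their starlike-class counterparts ($h_\phi$ and Lemma~\ref{lem-3.3}). As there, the estimate splits into a bound for the analytic part $f$ and a bound for the co-analytic part $g$, each valid only on $|z|=r\le 1/3$ because both ultimately rest on Lemma~\ref{lem-2.1}; this is exactly what forces the $\min\{1/3,r^*_{k,\phi}\}$ in the conclusion.

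First I would bound the analytic coefficients. Write $h(z)=z+\sum_{n=2}^{\infty}e_nz^n$. Since $f\prec h$, Lemma~\ref{lem-2.1} gives $\sum_{n=1}^{\infty}|a_n|r^n\le\sum_{n=1}^{\infty}|e_n|r^n$ for $r\le 1/3$. The one place where the starlike case genuinely differs from Theorem~\ref{thm-3.1} is that no analog of Lemma~\ref{lem-3.5} is on record for $\mathcal{S}^*(\phi)$, so instead I would extract the needed estimate directly from Lemma~\ref{lem-3.3}: since $h\in\mathcal{S}^*(\phi)$ we have $h(z)/z\prec h_\phi(z)/z$, and both sides are analytic in $\mathbb{D}$ with constant term $1$; applying Lemma~\ref{lem-2.1} to this subordination and multiplying through by $r$ gives $\sum_{n=2}^{\infty}|e_n|r^n\le\sum_{n=2}^{\infty}|d_n|r^n$, hence $\sum_{n=1}^{\infty}|e_n|r^n\le r+\sum_{n=2}^{\infty}|d_n|r^n=\hat h_\phi(r)$ for $r\le 1/3$. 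Chaining, $\sum_{n=1}^{\infty}|a_n|r^n\le\hat h_\phi(r)$ for $r\le 1/3$. For the co-analytic part, sense-preservation and Schwarz's lemma give $|g'(z)|\le k|f'(z)|$ with $k=(K-1)/(K+1)$, so Lemma~\ref{lem-2.2} yields $\sum_{n=1}^{\infty}|b_n|r^n\le k\sum_{n=1}^{\infty}|a_n|r^n\le k\,\hat h_\phi(r)$ for $r\le 1/3$. Adding and using $1+k=2K/(K+1)$ produces
\[ C_2(r):=\sum_{n=1}^{\infty}(|a_n|+|b_n|)r^n\le(1+k)\hat h_\phi(r)=Q_{k,\phi}(r)-h_\phi(-1),\qquad r\le 1/3. \]

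Next I would analyze $Q_{k,\phi}$. Since $\hat h_\phi$ has non-negative Taylor coefficients and leading term $r$, $Q_{k,\phi}$ is continuous and strictly increasing on $(0,1)$ with $Q_{k,\phi}(0)=h_\phi(-1)<0$; letting $r\to 1$ in Lemma~\ref{lem-3.3} gives $-h_\phi(-1)\le h_\phi(1)\le\hat h_\phi(1)\le(1+k)\hat h_\phi(1)$, so $\lim_{r\to1}Q_{k,\phi}(r)\ge 0$. If this limit is $0$, then $Q_{k,\phi}<0$ throughout $(0,1)$, and since letting $r\to 1$ in the growth theorem of Lemma~\ref{lem-3.3} gives $-h_\phi(-1)\le d(h(0),\partial h(\mathbb{D}))$, we conclude $C_2(r)\le -h_\phi(-1)\le d(h(0),\partial h(\mathbb{D}))$ for $r\le 1/3$. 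Otherwise $Q_{k,\phi}$ has a unique zero $r^*_{k,\phi}\in(0,1)$, $Q_{k,\phi}(r)\le 0$ for $r\le r^*_{k,\phi}$, and the same chain gives \eqref{eq-3.7} for $r\le\min\{1/3,r^*_{k,\phi}\}$.

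Finally, for sharpness when $r^*_{k,\phi}\le 1/3$ and the Taylor coefficients of $h_\phi$ are non-negative, I would take $F=f+\overline{g}$ with $f=h=h_\phi$ and $g=k\lambda h_\phi$, $|\lambda|=1$; this is sense-preserving with dilatation of modulus $k$ and $f\prec h$ trivially. Then $\hat h_\phi=h_\phi$, and since $h_\phi$ has real coefficients and is symmetric about $\mathbb{R}$ one checks $d(h_\phi(0),\partial h_\phi(\mathbb{D}))=-h_\phi(-1)$; for $z=r>r^*_{k,\phi}$, $\sum_{n=1}^{\infty}(|d_n|+|k\lambda d_n|)r^n=(1+k)h_\phi(r)>-h_\phi(-1)$, so $r^*_{k,\phi}$ cannot be enlarged. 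I expect essentially no serious obstacle beyond bookkeeping: the only steps not a transcription of Theorem~\ref{thm-3.1} are the coefficient estimate of the second paragraph, which must be built from Lemma~\ref{lem-3.3} for want of an $\mathcal{S}^*(\phi)$-analog of Lemma~\ref{lem-3.5}, and the verification that $d(h_\phi(0),\partial h_\phi(\mathbb{D}))$ equals $-h_\phi(-1)$ exactly rather than merely being bounded below by it.
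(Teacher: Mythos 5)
Your proposal is correct and follows essentially the same route as the paper: the same chain $\sum|a_n|r^n\le\sum|e_n|r^n\le\hat h_\phi(r)$ via Lemma \ref{lem-2.1} and the subordination $h(z)/z\prec h_\phi(z)/z$ from Lemma \ref{lem-3.3}, the same use of Lemma \ref{lem-2.2} for the co-analytic part, the same two-case analysis of $Q_{k,\phi}$, and the same extremal function $f=h=h_\phi$, $g=k\lambda h_\phi$. Your explicit derivation of the coefficient inequality $\sum_{n\ge 2}|e_n|r^n\le\sum_{n\ge 2}|d_n|r^n$ by applying Lemma \ref{lem-2.1} to $h/z\prec h_\phi/z$ and multiplying by $r$ is in fact slightly more careful than the paper, which asserts this step without spelling it out.
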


\begin{rem}
	If we choose $\phi(z) = \frac{1 + Az}{1+Bz}$ ($-1\leq B<A\leq 1$), the class $\mathcal{S}^*(\phi)$ reduces to the familiar class  $\mathcal{S}^*(A,B)$ consisting of the Janowski starlike functions. For this particular $\phi$, by using Lemma \ref{lem-3.1}, we obtain that the function  $h_{\phi}(z)$ takes the following form
	\begin{align*}
		h_{\phi}(z)=\begin{cases}
			z(1+Bz)^{\frac{A-B}{B}},\;\; B\neq 0\\ze^{Az},\;\;\;\;\;\;\;\;\;\;\;\;\;\;\;\; B=0.
		\end{cases}
	\end{align*}
	and the absolute corresponding sum of Taylor expansion of $h_{\phi}$ is given by 
	\begin{align*}
		\hat{h}_{\phi}(r)=\begin{cases}
			r+\sum_{n=2}^{\infty}\prod_{m=0}^{n-2}\frac{|(B-A)+Bm|}{m+1}r^n,\;\; B\neq 0\\re^{Ar},\;\;\;\;\;\;\;\;\;\;\;\;\;\;\;\;\;\;\;\;\;\;\;\;\;\;\;\;\;\;\;\;\;\;\;\;\;\;\; B=0.
		\end{cases}
	\end{align*}
\end{rem}
In the special case where the function $\phi(z)$ is defined by $\phi(z) = \frac{1 + Az}{1+Bz}$, Theorem \ref{lem-3.2} does not hold in its most general form, but instead leads to the more specific result outlined in the following corollary. 
\begin{cor}\label{Cor-3.1}
	Suppose that $F(z)=f(z)+\overline{g(z)}=\sum_{n=1}^{\infty}a_nz^n+\overline{\sum_{n=1}^{\infty}b_nz^n}$ is a sense-preserving $K$-quasiconformal harmonic mapping in $\mathbb{D}$ and $f\prec h$, where $h\in\mathcal{S}^*(A,B)$. Consider 
	\begin{align*}
		Q_{A,B,k}(r):=\begin{cases}
			\left(\frac{2K}{K+1}\right)\left(r+\sum_{n=2}^{\infty}\prod_{m=0}^{n-2}\frac{|(B-A)+Bm|}{m+1}r^n\right)-(1-B)^{(A-B)/B},\;\; B\neq 0\\\left(\frac{2K}{K+1}\right)re^{Ar}-e^{-A},\;\;\;\;\;\;\;\;\;\;\;\;\;\;\;\;\;\;\;\;\;\;\;\;\;\;\;\;\;\;\;\;\;\;\;\;\;\;\;\;\;\;\;\;\;\;\;\;\;\;\;\;\;\;\;\;\;\;\;\;\;\; B=0.
		\end{cases}
	\end{align*}
	If $\lim_{r\to 1}Q_{A, B, k}(r)=0$, then 
	\begin{align}\label{Eq-3.8}
		\sum_{n=1}^{\infty}(|a_n|+|b_n|)r^n\leq d(h(0),\partial h(\mathbb{D}))
	\end{align}
	holds for $|z|=r\leq1/3$. Otherwise, the inequality \eqref{Eq-3.8} satisfies for $|z|=r\leq\min\{1/3,R^*_{A,B,k}\}$ where $R^*_{A,B,k}$ is the root in $(0,1)$ of the equation $Q_{A,B,k}(r)=0$. In this case, the result is sharp if $R^*_{A,B,k}\leq 1/3$ and the Taylor coefficients of $h_\phi$  are non-negative.
\end{cor}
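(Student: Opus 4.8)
The plan is to obtain Corollary~\ref{Cor-3.1} by specializing Theorem~\ref{thm-3.2} to the Janowski function $\phi(z)=(1+Az)/(1+Bz)$ with $-1\le B<A\le1$, so that the entire task reduces to making the quantities $h_\phi$, $\hat h_\phi(r)$ and $h_\phi(-1)$ appearing in Theorem~\ref{thm-3.2} explicit for this $\phi$. First I would verify that this $\phi$ satisfies Definition~\ref{def-3.1}: it is a M\"obius transformation (hence analytic and univalent on $\mathbb{D}$), $\phi(0)=1$, $\phi'(0)=A-B>0$, it has real Taylor coefficients so it is symmetric about the real axis, $\phi(\mathbb{D})$ is a disk (a half-plane when $B=-1$) contained in the right half-plane so that $\mathrm{Re}\,\phi>0$, and $\phi(\mathbb{D})$ is convex, hence starlike with respect to the interior point $1=\phi(0)$; moreover $\min_{|z|=r}|\phi(z)|=\phi(-r)$ and $\max_{|z|=r}|\phi(z)|=\phi(r)$, which is the standing assumption behind Lemma~\ref{lem-3.4}. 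Thus Lemma~\ref{lem-3.1} applies and $h_\phi$ is well defined.

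Next I would compute $h_\phi$ explicitly. By Lemma~\ref{lem-3.1}, $h_\phi$ is the unique normalized solution of $zh_\phi'(z)/h_\phi(z)=\phi(z)$; writing $(\log h_\phi)'(z)=\phi(z)/z=\tfrac1z+\tfrac{A-B}{1+Bz}$ for $B\neq0$ and integrating with $h_\phi(z)/z\to1$ as $z\to0$ gives $h_\phi(z)=z(1+Bz)^{(A-B)/B}$, while for $B=0$ the same computation yields $h_\phi(z)=ze^{Az}$. Expanding via the generalized binomial series (respectively the exponential series) and reading off coefficients gives $d_n=\binom{(A-B)/B}{n-1}B^{n-1}=\prod_{m=0}^{n-2}\tfrac{(A-B)-mB}{m+1}$ (respectively $d_n=A^{n-1}/(n-1)!$); taking absolute values termwise and using $|(A-B)-mB|=|(B-A)+mB|$ produces
\[
\hat h_\phi(r)=r+\sum_{n=2}^{\infty}\,\prod_{m=0}^{n-2}\frac{|(B-A)+mB|}{m+1}\,r^n
\qquad\Bigl(\text{resp. } \hat h_\phi(r)=re^{Ar}\Bigr),
\]
and evaluating $-h_\phi(-r)$ as $r\to1^-$ (the limit exists by the monotonicity recorded in Lemma~\ref{lem-3.3}) gives $h_\phi(-1)=-(1-B)^{(A-B)/B}$ (resp.\ $h_\phi(-1)=-e^{-A}$).

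It then suffices to substitute these into $Q_{k,\phi}(r)=\bigl(\tfrac{2K}{K+1}\bigr)\hat h_\phi(r)+h_\phi(-1)$ from Theorem~\ref{thm-3.2}: the outcome is exactly the piecewise function $Q_{A,B,k}(r)$ of the corollary, and $\lim_{r\to1}Q_{A,B,k}(r)=\lim_{r\to1}Q_{k,\phi}(r)$. Hence Theorem~\ref{thm-3.2} delivers \eqref{Eq-3.8} for $r\le1/3$ when this limit vanishes, and for $r\le\min\{1/3,R^*_{A,B,k}\}$ with $R^*_{A,B,k}=r^*_{k,\phi}$ otherwise; the sharpness claim, realized by $F=h_\phi+\overline{k\lambda h_\phi}$ with $|\lambda|=1$ and $k=(K-1)/(K+1)$, carries over provided the $d_n$ are non-negative, which is precisely the stated proviso.

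The only step needing genuine attention is the identification $\binom{(A-B)/B}{n-1}B^{n-1}=\prod_{m=0}^{n-2}\tfrac{(A-B)-mB}{m+1}$ and the accompanying sharpness caveat: the factor $(A-B)-mB$ may change sign as $m$ grows, so in general the $d_n$ alternate, and they are all nonnegative only in sub-ranges such as $0\le B<A\le1$ or $B=0,\ A>0$. This is exactly why sharpness in the corollary is conditioned on the non-negativity of the Taylor coefficients of $h_\phi$, inherited from the same condition in Theorem~\ref{thm-3.2}. Everything else is direct verification.
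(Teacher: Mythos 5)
Your proposal is correct and follows essentially the same route as the paper: the corollary is obtained by specializing Theorem \ref{thm-3.2} to $\phi(z)=(1+Az)/(1+Bz)$, computing $h_\phi(z)=z(1+Bz)^{(A-B)/B}$ (resp.\ $ze^{Az}$ for $B=0$) from $zh_\phi'/h_\phi=\phi$, and substituting $\hat h_\phi(r)$ and $h_\phi(-1)$ into $Q_{k,\phi}$, exactly as the paper does in the remark preceding the corollary. Your additional care about the possible sign changes of the coefficients $d_n$ and the resulting restriction on the sharpness claim is consistent with the proviso stated in the corollary.
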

\begin{rem}
	If we set $K=1$ (so that $k=0$) and $f=h$, Corollary \ref{Cor-3.1} reduces to the result established in \cite[Theorem 1]{Anand-Jain-Kumar-BMMSS-2021}.
\end{rem}
\begin{rem}
	On taking $\phi(z)=\frac{1+(1-2\alpha)z}{1-z} (0\leq\alpha<1)$, the class $\mathcal{S}^*(\phi)$ becomes the family $\mathcal{S}^*(\alpha)$ of starlike functions of order $\alpha$. With this $\phi$, the function $h_{\phi}$ is given by 
	\begin{align*}
		h_{\phi}(z)=\frac{z}{(1-z)^{2(1-\alpha)}}
	\end{align*}
	and hence
	\begin{align*}
		\hat{h}_{\phi}(r)=\frac{r}{(1-r)^{2(1-\alpha)}}.
	\end{align*}
\end{rem}
For the particular choice $\phi(z) = \frac{1 + (1-2\alpha)z}{1-z}$, Theorem \ref{lem-3.2} leads to the result outlined in the following corollary.  
\begin{cor}\label{Cor-3.2}
	Suppose that $F(z)=f(z)+\overline{g(z)}=\sum_{n=1}^{\infty}a_nz^n+\overline{\sum_{n=1}^{\infty}b_nz^n}$ is a sense-preserving $K$-quasiconformal harmonic mapping in $\mathbb{D}$ and $f\prec h$, where $h\in\mathcal{S}^*(\alpha)$. Then 
	\begin{align*}
		\sum_{n=1}^{\infty}(|a_n|+|b_n|)r^n\leq d(h(0),\partial h(\mathbb{D}))
	\end{align*}
	holds for $|z|=r\leq\min\{1/3, R^*_{k,\alpha}\}$, where $R^*_{k,\alpha}$ is the root in $(0,1)$ of the equation
	\begin{align*}
		\left(\frac{2K}{K+1}\right)\frac{r}{(1-r)^{2(1-\alpha)}}-\frac{1}{2^{2(1-\alpha)}}=0.
	\end{align*}
	The number $R^*_{k,\alpha}$ is sharp if $R^*_{k,\alpha}\leq 1/3$. 
	
\end{cor}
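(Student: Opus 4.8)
The plan is to obtain Corollary \ref{Cor-3.2} as a direct specialization of Theorem \ref{thm-3.2} to the Ma--Minda function
\[
\phi(z)=\frac{1+(1-2\alpha)z}{1-z},\qquad 0\le\alpha<1,
\]
for which $\mathcal{S}^*(\phi)$ is precisely the class $\mathcal{S}^*(\alpha)$ of starlike functions of order $\alpha$. First I would record, exactly as in the remark immediately preceding the corollary, that applying Lemma \ref{lem-3.1} to this $\phi$ (that is, solving $zh_\phi'(z)/h_\phi(z)=(1+(1-2\alpha)z)/(1-z)$ by separation of variables under $h_\phi(0)=0$, $h_\phi'(0)=1$) yields $h_\phi(z)=z/(1-z)^{2(1-\alpha)}$, so that $\hat h_\phi(r)=r/(1-r)^{2(1-\alpha)}$ and $h_\phi(-1)=-1/2^{2(1-\alpha)}$. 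Substituting these into the definition of $Q_{k,\phi}$ from Theorem \ref{thm-3.2} gives
\[
Q_{k,\phi}(r)=\left(\frac{2K}{K+1}\right)\frac{r}{(1-r)^{2(1-\alpha)}}-\frac{1}{2^{2(1-\alpha)}},
\]
which is exactly the function whose zero in $(0,1)$ is named $R^*_{k,\alpha}$ in the statement.

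Next I would determine which branch of Theorem \ref{thm-3.2} applies. Since $0\le\alpha<1$ we have $2(1-\alpha)>0$, hence $\hat h_\phi(r)=r/(1-r)^{2(1-\alpha)}\to+\infty$ as $r\to1^-$, so $\lim_{r\to1}Q_{k,\phi}(r)=+\infty\neq0$. Thus we are always in the second (``otherwise'') case of Theorem \ref{thm-3.2}: $Q_{k,\phi}$ is continuous and strictly increasing on $(0,1)$ with $Q_{k,\phi}(0)=h_\phi(-1)<0$ and $Q_{k,\phi}(1^-)=+\infty$, so $Q_{k,\phi}(r)=0$ has a unique root $R^*_{k,\alpha}\in(0,1)$, and Theorem \ref{thm-3.2} yields $\sum_{n\ge1}(|a_n|+|b_n|)r^n\le d(h(0),\partial h(\mathbb{D}))$ for $|z|=r\le\min\{1/3,R^*_{k,\alpha}\}$, which is the assertion of the corollary.

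For the sharpness claim I would invoke the sharpness clause of Theorem \ref{thm-3.2}, which needs $R^*_{k,\alpha}\le1/3$ together with non-negativity of the Taylor coefficients of $h_\phi$. The latter holds automatically here: expanding $h_\phi(z)=z(1-z)^{-2(1-\alpha)}=\sum_{n\ge1}\binom{2(1-\alpha)+n-2}{n-1}z^n$, each coefficient equals $\frac{(2(1-\alpha))(2(1-\alpha)+1)\cdots(2(1-\alpha)+n-2)}{(n-1)!}$, a product of positive factors, hence positive. Consequently, when $R^*_{k,\alpha}\le1/3$, the extremal harmonic mapping $F=h_\phi+\overline{k\lambda h_\phi}$ with $|\lambda|=1$ and $k=(K-1)/(K+1)$ used in the proof of Theorem \ref{thm-3.2} already shows that $R^*_{k,\alpha}$ cannot be enlarged.

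I do not expect any genuine obstacle, as the analytic content sits entirely inside Theorem \ref{thm-3.2}. The only points needing care are routine bookkeeping: verifying by separation of variables that Lemma \ref{lem-3.1} really returns $h_\phi(z)=z/(1-z)^{2(1-\alpha)}$ for this $\phi$; checking that $\phi(z)=(1+(1-2\alpha)z)/(1-z)$ is a bona fide Ma--Minda function and meets whatever auxiliary modulus hypotheses (such as those in Lemma \ref{lem-3.4}) the proof of Theorem \ref{thm-3.2} invokes, which is immediate since $\phi$ is a M\"obius transformation of $\mathbb{D}$ onto the half-plane ${\rm Re}\,w>\alpha$ that is symmetric about the real axis; and the elementary limit and monotonicity facts for $Q_{k,\phi}$ noted above.
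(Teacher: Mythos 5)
Your proposal is correct and follows the same route as the paper: the corollary is obtained by specializing Theorem \ref{thm-3.2} to $\phi(z)=(1+(1-2\alpha)z)/(1-z)$, using $h_\phi(z)=z/(1-z)^{2(1-\alpha)}$, $\hat h_\phi(r)=r/(1-r)^{2(1-\alpha)}$ and $h_\phi(-1)=-2^{-2(1-\alpha)}$, with the ``otherwise'' branch applying since $Q_{k,\phi}(r)\to+\infty$ as $r\to1^-$. Your additional check that the Taylor coefficients of $h_\phi$ are non-negative (so the sharpness clause of Theorem \ref{thm-3.2} is actually available) is a detail the paper leaves implicit, and it is verified correctly.
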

For $\alpha$ in the interval $[0, \frac{1}{2}\log_3(3+3k)]$, the Bohr radius $R_{k,\alpha}$ remains less than or equal to $1/3$, whereas for $\alpha$ in $(\frac{1}{2}\log_3(3+3k), 1)$, it becomes strictly greater than $1/3$
\begin{rem}
	If we set $K=1$ and $f=h$, then Corollary \ref{Cor-3.2} reduces to the results in \cite[Corollary 3.4]{Hamada-JMAA-2021} and \cite[Corollary 1]{Anand-Jain-Kumar-BMMSS-2021}
\end{rem}
\begin{rem}
	The result of Corollary \ref{Cor-3.2} coincides with that of \cite[Theorem 3]{Bhowmik-Das-JMAA-2018} for $K=1$ and $\alpha \in [0, 1/2]$.
\end{rem}
\begin{rem}
	If we take $\phi(z)=\frac{1+z}{1-z}$, the class $\mathcal{S}^*(\phi)$ reduces to the class $\mathcal{S}^*$ of starlike functions. For this choice of $\phi$, it can be easily seen  that 
	\begin{align*}
		h_{\phi}(z)=\frac{z}{(1-z)^2}\;\;\; \mbox{and}\;\;\; \hat{h}_{\phi}(r)=\frac{r}{(1-r)^2}.
	\end{align*}
\end{rem}
With this choice $\phi(z) = \frac{1+z}{1-z}$, Theorem \ref{lem-3.2} directly leads to the result shown in the following corollary.
\begin{cor}\label{Cor-3.3}
	Suppose that $F(z)=f(z)+\overline{g(z)}=\sum_{n=1}^{\infty}a_nz^n+\overline{\sum_{n=1}^{\infty}b_nz^n}$ is a sense-preserving $K$-quasiconformal harmonic mapping in $\mathbb{D}$ and $f\prec h$, where $h\in\mathcal{S}^*$. Then 
	\begin{align*}
		\sum_{n=1}^{\infty}(|a_n|+|b_n|)r^n\leq d(h(0),\partial h(\mathbb{D}))
	\end{align*}
	holds for $|z|=r\leq R_k= (5K+1-2\sqrt{6K^2+2K})/(K+1)$. The number $R_k$ is sharp.
\end{cor}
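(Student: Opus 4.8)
The plan is to derive this corollary as the specialization of Theorem~\ref{thm-3.2} to the Ma--Minda function $\phi(z)=(1+z)/(1-z)$. First I would record that for this choice $\mathcal{S}^*(\phi)$ coincides with the classical starlike class $\mathcal{S}^*$, and that by Lemma~\ref{lem-3.1} the associated extremal function is the Koebe function $h_\phi(z)=z/(1-z)^2=\sum_{n=1}^{\infty}n z^n$. In particular every Taylor coefficient $d_n=n$ is non-negative, so the absolute sum is $\hat{h}_\phi(r)=\sum_{n=1}^{\infty}n r^n=r/(1-r)^2$, while $h_\phi(-1)=\lim_{r\to1}h_\phi(-r)=\lim_{r\to1}\bigl(-r/(1+r)^2\bigr)=-1/4$; equivalently $d(h_\phi(0),\partial h_\phi(\mathbb{D}))=1/4$, since the Koebe function maps $\mathbb{D}$ onto $\mathbb{C}\setminus(-\infty,-1/4]$.

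Next I would substitute these data into the auxiliary function of Theorem~\ref{thm-3.2}, obtaining
\[
Q_{k,\phi}(r)=\Bigl(\frac{2K}{K+1}\Bigr)\frac{r}{(1-r)^2}-\frac14 .
\]
Because $\hat{h}_\phi(r)\to\infty$ as $r\to1^-$, we have $\lim_{r\to1}Q_{k,\phi}(r)=+\infty>0$, so Theorem~\ref{thm-3.2} places us in the generic case: the Bohr inequality holds for $|z|=r\le\min\{1/3,\,r^*_{k,\phi}\}$, where $r^*_{k,\phi}$ is the unique root in $(0,1)$ of $Q_{k,\phi}(r)=0$. Clearing denominators, $Q_{k,\phi}(r)=0$ is equivalent to $(1-r)^2=\tfrac{8K}{K+1}r$, i.e. to the quadratic $r^2-\tfrac{10K+2}{K+1}r+1=0$, whose discriminant simplifies to $\bigl(\tfrac{10K+2}{K+1}\bigr)^2-4=\tfrac{32K(3K+1)}{(K+1)^2}$; its smaller root is therefore
\[
r^*_{k,\phi}=\frac{5K+1-2\sqrt{6K^2+2K}}{K+1}=R_k .
\]

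To complete the argument I would verify that $R_k\le 1/3$ for every $K\ge1$, which forces the minimum in Theorem~\ref{thm-3.2} to equal $R_k$ and, together with the non-negativity of the coefficients of $h_\phi$ noted above, activates the sharpness clause so that $R_k$ is best possible (with the extremal mapping $f=h=h_\phi$, $g=k\lambda h_\phi$, $|\lambda|=1$). The inequality $R_k\le1/3$ is equivalent to $7K+1\le 3\sqrt{6K^2+2K}$, and squaring gives $49K^2+14K+1\le 54K^2+18K$, i.e. $(5K-1)(K+1)\ge0$, which holds since $K\ge1$. Combining these steps yields $\sum_{n=1}^{\infty}(|a_n|+|b_n|)r^n\le d(h(0),\partial h(\mathbb{D}))$ for $|z|=r\le R_k$ with $R_k$ sharp. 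I do not expect any genuine obstacle here: the whole proof is a direct substitution into Theorem~\ref{thm-3.2}, and the only computation requiring a little care is the elementary verification that $R_k\le 1/3$.
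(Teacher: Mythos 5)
Your proposal is correct and follows exactly the route the paper intends: Corollary \ref{Cor-3.3} is obtained by substituting $\phi(z)=(1+z)/(1-z)$, hence $\hat{h}_\phi(r)=r/(1-r)^2$ and $h_\phi(-1)=-1/4$, into Theorem \ref{thm-3.2} and solving $Q_{k,\phi}(r)=0$; your quadratic, its smaller root $R_k=(5K+1-2\sqrt{6K^2+2K})/(K+1)$, and the check $R_k\le 1/3$ (equivalent to $(5K-1)(K+1)\ge 0$) are all accurate. The only thing you add beyond what the paper records is the explicit verification that $R_k\le 1/3$, which is indeed needed to drop the minimum and to activate the sharpness clause, so nothing is missing.
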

\begin{rem}
	If we consider $K=1$ (which corresponds to $k=0$) and $f=h$ in Corollary \ref{Cor-3.3}, then $R_0=3-2\sqrt{2}$ becomes the sharp Bohr radius for the class of starlike functions. 
\end{rem}
\begin{rem}
	If we take $K=1$ (which corresponds to $k=0$), we see that $r^*_{0,\phi}$ is the root in $(0,1)$ of the equation $Q_{k,\phi}(r):=\hat{h}_\phi(r)+h_\phi(-1)=0$. This indicates that the Theorem \ref{lem-3.2} includes the result of      \cite[Theorem 3.1]{Hamada-JMAA-2021} as a special case.
\end{rem}
\begin{proof}[\bf Proof of Theorem \ref{thm-3.2}]
	Let $h\in \mathcal{S}^*(\phi)$ with the Taylor expansion $h(z)=z+\sum_{n=2}^{\infty}e_nz^n$. Since $f\prec h$, by applying Lemma \ref{lem-3.1}, we have
	\begin{align}\label{eq-3.8}
		\sum_{n=1}^{\infty}|a_n|r^n\leq\sum_{n=1}^{\infty}|e_n|r^n\;\;\mbox{for}\;\; |z|=r\leq 1/3.
	\end{align}
	Since $h(z)/z\prec h_{\phi}(z)/z$, it follows along with the inequality \eqref{eq-3.8} that 
	\begin{align}\label{eq-3.9}
		\sum_{n=1}^{\infty}|a_n|r^n\leq\sum_{n=1}^{\infty}|e_n|r^n\leq\sum_{n=1}^{\infty}|d_n|r^n\;\;\mbox{for}\;\; |z|=r\leq 1/3.
	\end{align}
	By applying the similar arguments as in the prof of Theorem \ref{thm-2.1} and Lemma \ref{lem-2.2}, we obtain
	
	\begin{align}\label{eq-3.10}
		\sum_{n=1}^{\infty}|b_n|r^n\leq k\sum_{n=1}^{\infty}|a_n|r^n\leq k\sum_{n=1}^{\infty}|d_n|r^n \;\;\mbox{for}\;\; |z|=r\leq 1/3.
	\end{align}
	By using \eqref{eq-3.9} and \eqref{eq-3.10}, we have 
	\begin{align}\label{eq-3.11}
		C_2(r):=\sum_{n=1}^{\infty}(|a_n|+|b_n|)r^n\leq& (k+1)\sum_{n=1}^{\infty}|d_n|r^n\\\nonumber=&(k+1)\hat{h}_{\phi}(r)\\=&Q_{k,\phi}(r)-h_{\phi}(-1)\nonumber,
	\end{align}
	where $Q_{k,\phi}(r)=(k+1)\hat{h}_p(r)+h_{\phi}(-1)$. It is obvious that $Q_{k,\phi}$ is continuous and strictly increasing function on $(0,1)$. Also, we can deduce the following inequality by letting $r\to 1$ in the inequality of Lemma \ref{lem-3.3} that
	\begin{align*}
		-h_{\phi}(-1)\leq h_{\phi}(1)\leq\hat{h}_{\phi}(1)\leq(k+1)\hat{h}_{\phi}(1).
	\end{align*}
	It follows that $(k+1)\hat{h}_{\phi}(1)+h_{\phi}(-1)\geq 0$ and hence $\lim_{r\to 1}Q_{k,\phi}(r)=(k+1)\hat{h}_{\phi}(1)+h_{\phi}(-1)\geq 0$.\vspace{2mm}
	
	\noindent{\bf Case I:} $\lim_{r\to 1}Q_{k,\phi}(r)=0$.\vspace{1.2mm}
	
	Since $Q_{k,\phi}(r)$ is strictly increasing function on $(0,1)$, we obtain $Q_{k,\phi}(r)<0$ for all $r\in(0,1)$. Since for all $z\in\mathbb{D}$, $|h(z)|\geq -h_{\phi}(-r)$, we obtain $ d(h(0),\partial h(\mathbb{D}))\geq -h_{\phi}(-1)$. Then from the inequality \eqref{eq-3.11}, we have $C_2(r)\leq-h_{\phi}(-1)\leq d(h(0),\partial h(\mathbb{D}))$ for $|z|=r\leq 1/3$. \vspace{1.2mm} 
	
	\noindent{\bf Case II:} $\lim_{r\to 1}Q_{k,\phi}(r)>0$.\vspace{1.2mm}
	
	It is noted that $Q_{k,\phi}(0)=h_{\phi}(-1)<0$ ensures the existence of a unique root of $Q_{k,\phi}(r)=0$ in $(0,1)$, denoted by $r^*_{k,\phi}$. Then we have $Q_{k,\phi}(r)\leq0$ for $|z|=r\leq r^*_{k,\phi}$. This gives 
	\begin{align*}
		C_2(r)\leq-h_{\phi}(-1)\leq d(h(0),\partial h(\mathbb{D}))\;\; \mbox{for}\;\; |z|=r\leq \min\{1/3,r^*_{k,\phi}\}.
	\end{align*}
	To establish the sharpness of the radius obtained in {Case II} when $r^*_{k,\phi}\leq 1/3$, we consider the function $F(z)=f(z)+\overline{g(z)}$ in $\mathbb{D}$, where $f=h=h_{\phi}$ and $g=k\lambda h_{\phi}$ with non-negative Taylor coefficient of $h_{\phi}$, and $|\lambda|=1$. Then the facts $\hat{h}_{\phi}=h_{\phi}$ and $d(h_{\phi}(0),\partial h_{\phi}(\mathbb{D})=-h_{\phi}(-1)$ give that
	\begin{align*}
		\sum_{n=1}^{\infty}(|d_n|+|k\lambda d_n|)r^n=(k+1)\hat{h}_{\phi}(r)>-h_{\phi}(-1)=d(h_{\phi}(0),\partial h_{\phi}(\mathbb{D})
	\end{align*}
	holds for $r>r^*_{k,\phi}$. This shows that the radius $r^*_{k,\phi}$ cannot be improved. 
\end{proof}

\noindent{\bf Acknowledgment:} The first author is supported by Science and Engineering Research Board (SERB) (File No. SUR/2022/002244), Govt. of India, and the second author is supported by UGC-JRF (NTA Ref. No.: $ 221610103011$), New Delhi, India. \vspace{2mm}

%\section{Declaration}
\noindent\textbf{Compliance of Ethical Standards}\\

\noindent\textbf{Conflict of interest.} The authors declare that there is no conflict  of interest regarding the publication of this paper.\vspace{1.5mm}

\noindent\textbf{Data availability statement.}  Data sharing not applicable to this article as no datasets were generated or analyzed during the current study.\vspace{1.5mm}

\noindent\textbf{Funding.} No fund.

\end{document}